\documentclass[11pt]{amsart}

\usepackage[T1]{fontenc}
\usepackage[dvipsnames]{xcolor}
\usepackage{mathtools,amssymb}
\usepackage{graphicx} 
\usepackage[mathscr]{euscript} 
\usepackage{enumitem}
\usepackage{fancyhdr}
\usepackage{comment}
\usepackage[
  a4paper,
  bindingoffset=0.2in,
  left=0.8in,
  right=1in,
  top=1in,
  bottom=1in,
  footskip=.25in,
  headheight=15pt
]{geometry}
\usepackage[
  pagebackref,
  colorlinks,
  allcolors=Mahogany
]{hyperref}
\usepackage[capitalize]{cleveref}

\newtheorem{theorem}{Theorem}[section]
\newtheorem{lemma}[theorem]{Lemma}
\newtheorem{proposition}[theorem]{Proposition}

\newtheorem*{corollary*}{Corollary}

\newtheorem{atheorem}{Theorem}

\newtheorem{acorollary}[atheorem]{Corollary}
\theoremstyle{definition}

\theoremstyle{remark}

\newtheorem{remark}[theorem]{Remark}

\newcommand{\subref}[2]{\hyperref[#2]{\ref{#1}.\ref{#2}}}

\numberwithin{equation}{section}
\newcommand{\R}{\mathbb{R}}
\newcommand{\Z}{\mathbb{Z}}

\newcommand{\prob}{\mathbb{P}}

\newcommand{\ang}[1]{\langle #1 \rangle}

\newcommand{\E}{\mathbb{E}}

\newcommand{\B}{\mathcal{B}}

\newcommand{\M}{\mathcal{M}}

\newcommand{\F}{\mathcal{F}}
\newcommand{\Hi}{\mathbb{H}}
\newcommand{\Ham}{\mathcal{H}}

\title{Predictable subordination, sharp martingale inequalities and applications}
\author{Francesco D'Emilio}
\thanks{\textbf{Funding Acknowledgment:} Francesco D'Emilio is partially supported by the Simons Foundation through a Simons Dissertation Fellowship.}
\begin{document}
\subjclass[2020]{Primary 60G44; Secondary 60G46, 42B20}

\keywords{Martingale inequalities, predictable subordination,
differential subordination,Bellman functions, discrete Riesz transforms,
dimension-free estimates, Hamming cube}
\maketitle
\begin{abstract}
We introduce a new method for obtaining sharp $L^p$ estimates for martingales under predictable analogues of
differential subordination. By allowing suitable comparisons between continuous and jump variation adapted to the relevant range of \(p\), we retain sharp constants in settings where pathwise differential subordination fails. The key idea is to study together the continuous and jump
contributions arising from the appropriate Bellman functions.
Although these contributions need not be nonpositive separately, we
quantify their defects and show that they compensate at the predictable
level. This compensation mechanism is inspired by the author's earlier
work \cite{dunklmine}. As an application, we
substantially improve the explicit dimension-free bounds of
\cite{domelevo2026} for the Riesz vectors on the Hamming cube and on
$\Z^n$.
\end{abstract}

\section{Introduction}
Martingale inequalities have played a central role at the intersection
of probability and harmonic analysis. A foundational development was
Burkholder's work on martingale transforms \cite{Burkholder1966}, followed
by the square-function and maximal inequalities in
\cite{Burkholder1973,BurkholderDavisGundy1972,BurkholderGundy1970,Davis1970}.
These results became essential tools of stochastic analysis and revealed
a close connection between martingale transforms and singular integrals.
For historical accounts of this circle of ideas, we refer to
\cite{Ban2,osekowski}.
A further breakthrough was the determination of optimal constants
through what is now called Burkholder's method, which rests on the construction of suitable
Bellman functions. Following the developments in
\cite{Ban1,Burkholder1979,burk,Burkholder1987Stochastic,bur88}, Wang
\cite{W95} established the form
that is our starting point:
\[
    \|Y\|_p\leq(p^*-1)\|X\|_p,
    \qquad 1<p<\infty,
\]
where $p^*=\max\left\{p,p'\right\}$ and $p'=p/(p-1)$ is the dual exponent of $p$. Here $X,Y$ are c\`adl\`ag martingales taking values in a separable Hilbert
space $\Hi$, and
\[
    \|X\|_p:=\sup_{t\geq0}\|X_t\|_{L^p}.
\]
The estimate holds when $Y$ is differentially subordinated to $X$:
$|Y_0|\leq|X_0|$ almost surely and
\begin{equation}\label{class subord}
    [X]_t-[Y]_t
    \quad\text{is nonnegative and nondecreasing in }t.
\end{equation}
The constant $p^*-1$ is optimal. We also write $d[Y]\leq d[X]$ for
\eqref{class subord}. \\
Several extensions have explored which aspects of this hypothesis are
essential. Os\k{e}kowski \cite{OsekowskiECP2011} retained the sharp
constants under pathwise comparisons of accumulated or remaining
optional quadratic variation, with different hypotheses in the two
conjugate exponent ranges. Weighted and change-of-law inequalities were
developed in
\cite{BanuelosOsekowski2018Weighted,DomelevoPetermichl2019}, retaining
classical differential subordination. In another direction, weak
differential subordination in UMD spaces requires optional quadratic
variation comparisons and connects martingale inequalities with Banach-space
geometry and Fourier multipliers
\cite{Yaroslavtsev2018,Yaroslavtsev2020}. \\
The applications of martingale inequalities to harmonic analysis have
an equally long history. The martingale representation of singular
integrals, originating in the work of Gundy-Varopoulos \cite{GV},
combined with martingale inequalities, gives dimension-free
bounds for Riesz transforms and the Beurling transform
\cite{BanuelosJanakiraman2008,BW95}. The optimal constants for the
Euclidean Riesz vector and the Beurling transform remain unknown.
The same principle has subsequently been applied to L\'evy multipliers
and broader classes of Fourier multipliers
\cite{BanuelosBogdan2007,BanuelosBielaszewskiBogdan2011,
BanuelosOsekowski2012Fourier,GeissMontgomerySmithSaksman2010}, to Riesz
transforms and multipliers on manifolds and Lie groups
\cite{Li2008,BanuelosBaudoin2012,CarbonaroDragicevic2013Bakry,
ApplebaumBanuelos2014,BanuelosOsekowski2015Manifolds,
BanuelosBaudoinChenSire2021}, and to discrete singular integrals
\cite{LustPiquard1998,LustPiquard2004,BanuelosKwasnicki2019,
BanuelosKimKwasnicki2026}. \\

The present paper studies martingale comparisons at the level of predictable
compensators. While classical differential
subordination compares the continuous and jump parts separately, predictable subordination, which is generally a weaker hypothesis, can compare their combined
contributions. We show that suitable hypotheses of this kind preserve
sharp martingale estimates, and determine the exact exponent ranges in which
they imply any finite universal $L^p$ bound. This immediately finds applications in harmonic analysis, in particular to martingale representations for which
classical differential subordination fails, as we will discuss.
\subsection{Main results}
Let $(\Omega,\F,(\F_t)_{t\geq0},\prob)$ be a filtered probability space
satisfying the usual conditions. All martingales below are taken in
their c\`adl\`ag versions. Let $\Hi$ be a real separable Hilbert space,
with inner product $(\cdot,\cdot)$ and norm $|\cdot|$. For a $\Hi$-valued martingale $X$, let
\[
    [X]_t=\sum_{n\geq1}[(X-X_0,e_n)]_t
\]
be its scalar quadratic variation, where $\{e_n\}_n$ is an orthonormal basis of $\Hi$. Decomposing
\[
    X=X_0+X^c+X^d,
\]
where $X^c,X^d$ are the continuous and purely discontinuous parts of $X$
starting at zero, we have
\[
    [X]^c=[X^c],
    \qquad
    [X^d]_t=\sum_{0<s\leq t}|\Delta X_s|^2.
\]
As observed in \cite{W95}, differential subordination is equivalent
to the following pathwise conditions, holding almost surely:
\begin{enumerate}[label=\textup{(\roman*)}]
    \item $|Y_0|\leq|X_0|$;
    \item $|\Delta Y_t|\leq|\Delta X_t|$ for all $t>0$;
    \item $[X^c]-[Y^c]$ is nonnegative and nondecreasing.
\end{enumerate}
In particular, jump variation cannot compensate for a failure of the
continuous comparison. Whenever a predictable bracket $\langle M\rangle$ is used, we assume
that $M-M_0$ is locally square-integrable and take
$\langle M\rangle$ to be the predictable compensator of $[M]$, starting
at zero. For continuous martingales, optional and predictable quadratic
variations coincide. Consequently, whenever the brackets are defined,
\[
    \langle X\rangle
    =\langle X^c\rangle+\langle X^d\rangle
    =[X^c]+\langle X^d\rangle.
\]
We say that $Y$ is \emph{predictably subordinated} to $X$ if
$|Y_0|\leq|X_0|$ almost surely and
\begin{equation}\label{pred subord}
    \langle X\rangle_t-\langle Y\rangle_t
    \quad\text{is nonnegative and nondecreasing in }t.
\end{equation}
We write $d\langle Y\rangle\leq d\langle X\rangle$ for\eqref{pred subord}. Differential subordination implies predictable subordination,
since taking predictable compensators preserves the order of the
associated increasing measures. The two notions agree for continuous
martingales, but differ in the presence of jumps. \\ Our first result shows that, in the appropriate range of $p$, passing from the optional to the predictable bracket does not produce any loss in the sharp constant. 
\begin{atheorem} \label{main theorem1}
Let $X$ and $Y$ be $\Hi$-valued martingales such that $|Y_0|\leq |X_0|$. For $p \geq 2$, if $X$ is càdlàg, $Y$ is continuous and $d \ang{Y} \leq d \ang{X}$, then 
\begin{equation} \label{A.1} \|Y\|_p \leq (p-1) \|X\|_p.\end{equation}
For $1<p\leq2$, if $Y$ is merely càdlàg and $d\ang{Y} \leq d\ang{X^c}$, then 
\begin{equation}\label{A.2}\|Y\|_p \leq \frac{1}{(p-1)}\|X\|_p.\end{equation}
In both cases, the constants are sharp.
\end{atheorem}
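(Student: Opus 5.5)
We will use Burkholder's method with the Wang–Burkholder Bellman function
\[
U(x,y)=p\Bigl(1-\tfrac{1}{p^*}\Bigr)^{p-1}\bigl(|y|-(p^*-1)|x|\bigr)\bigl(|x|+|y|\bigr)^{p-1},
\]
which satisfies $V(x,y):=|y|^p-(p^*-1)^p|x|^p\le U(x,y)$ and $U(x,y)\le0$ when $|y|\le|x|$. The standard reductions apply. We may assume $\|X\|_p<\infty$, work up to a localizing sequence of stopping times, and mollify $U$ (or handle the nonsmooth set $\{|x||y|=0\}$ as in \cite{W95}), so it suffices to show $\E U(X_t,Y_t)\le0$ for all $t$. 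Then $\E|Y_t|^p\le(p^*-1)^p\E|X_t|^p$, and letting $t\to\infty$ gives \eqref{A.1} and \eqref{A.2}.

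We apply Itô's formula to $U(X,Y)$ and write
\[
U(X_t,Y_t)=U(X_0,Y_0)+\text{(local mart.)}+\tfrac12 I_t+J_t .
\]
Here $I_t$ is the second-order continuous term $\int_0^t\bigl(U_{xx}\,d[X^c]+2U_{xy}\,d[X^c,Y^c]+U_{yy}\,d[Y^c]\bigr)$. The jump term is
\[
J_t=\sum_{s\le t}\Bigl(U(X_s,Y_s)-U(X_{s-},Y_{s-})-U_x\Delta X_s-U_y\Delta Y_s\Bigr).
\]
The first term is nonpositive since $|Y_0|\le|X_0|$. The key convexity facts from \cite{burk,W95} are the following. $U$ is concave along the relevant directions in the sense that $Q(h,k):=\ang{D^2U\,(h,k),(h,k)}\le c(x,y)\,(|k|^2-|h|^2)$ with $c\ge0$. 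In addition, for each fixed $(x,y)$ the function $s\mapsto U(x+sh,y+sk)$ is concave when $|k|\le|h|$. The whole point is that we no longer have $|\Delta Y|\le|\Delta X|$. Instead we must compare $J$ with the continuous term only after taking predictable compensators.

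\textbf{Case $p\ge2$, $Y$ continuous.} Here $\Delta Y=0$, so each jump summand equals $U(X_{s-}+\Delta X_s,Y_{s-})-U(X_{s-},Y_{s-})-U_x\Delta X_s$. The aim is the quadratic bound
\[
U(x+h,y)-U(x,y)-U_x(x,y)h\le -c(x,y)|h|^2,
\]
with the same coefficient $c$ as in the continuous estimate. For $p\ge2$ we have $U(\cdot,y)$ concave in $x$, and one must check that its concavity modulus in $x$ is uniformly at least $c(x,y)$ along the segment, using the explicit formula for $p^*=p$. Granting this, we get $J_t\le-\int c\,d[X^d]$ and $\tfrac12I_t\le\int c\,(d[Y^c]-d[X^c])$, so
\[
\tfrac12I_t+J_t\le\int_0^t c(X_{s-},Y_{s-})\bigl(d[Y]_s-d[X]_s\bigr).
\]
Since the integrand is predictable and nonnegative, taking expectations replaces $[X^d]$ by $\ang{X^d}$. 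This yields $\E\int c\,(d\ang{Y}-d\ang{X})\le0$ by hypothesis.

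\textbf{Case $1<p\le2$, $d\ang{Y}\le d\ang{X^c}$.} Now $X$ may jump but its jumps only help. The argument will run symmetrically. For $1<p\le 2$ the function $U(x,\cdot)$ in $y$ is convex, so the jump in $X$ alone is handled by concavity in $x$ directions, which contributes at most $-c|\Delta X|^2\le0$. The $Y$-jumps must be bounded by $+c|\Delta Y|^2$, i.e. we need $U(x,y+k)-U(x,y)-U_yk\le c(x,y)|k|^2$ in a form that survives a simultaneous $x$-jump. Then after compensation the total drift is at most $\E\int c\,(d\ang{Y}-d[X^c])\le0$.

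We expect the \textbf{main obstacle} to be these one-sided jump estimates with exactly the coefficient $c(x,y)$ evaluated at the left limit. The defects, meaning how far each jump contribution fails to be nonpositive, must be dominated by $c|\Delta|^2$ globally, not just infinitesimally. We would verify them by reducing to the two-dimensional case $(|x|,|y|)$ and checking monotonicity of the relevant one-variable function along the jump segment. If the exact $U$ fails to give this, we would try the modified variant of $U$ used in \cite{dunklmine}.

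\textbf{Sharpness.} Sharpness follows because for continuous $X,Y$ predictable and differential subordination coincide, so the constants $p-1$ and $1/(p-1)$ cannot be improved by Burkholder's classical extremal examples. For \eqref{A.2} with $X$ continuous we have $X=X_0+X^c$, and the same examples apply.
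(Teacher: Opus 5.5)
There is a genuine gap: the jump estimates, which are the only non-classical content of the theorem, are assumed in both ranges rather than proved.

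\textbf{The range $p\ge2$.} Your plan here is the paper's: the same Burkholder function, the Hessian bound $D^2U\le-\Lambda_p\operatorname{diag}(I,-I)$ with $\Lambda_p=K_p(|x|+|y|)^{p-2}$, and an $X$-jump bound compensated against $\ang{X^d}$. But you only grant the jump inequality, and the check you propose would not prove it. At a point $x'$ of the segment from $x$ to $x+h$ one has $D^2_{xx}U(x',y)[e,e]=-\Lambda_p(x',y)|e|^2$ for $e\perp x'$. For $p>2$ this is strictly larger than $-\Lambda_p(x,y)|e|^2$ whenever $|x'|<|x|$. A segment such as $h=-2x$ passes through the origin, so no bound $D^2_{xx}U\le-\Lambda_p(x,y)I$ holds along it.

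What makes Lemma~\ref{lem:jump-burkholder} work is the identity $\nabla_xU(x,y)=-\Lambda_p(x,y)\,x$. Writing $\Phi(t):=C_p(|y|-(p-1)t)(t+|y|)^{p-1}$, it collapses the linear and quadratic terms:
\[
\mathcal J_U(x,y;h)+\tfrac12\Lambda_p(x,y)|h|^2=\Phi(|x+h|)-\Phi(|x|)+\tfrac12\Lambda_p(x,y)\bigl(|x+h|^2-|x|^2\bigr).
\]
This is a function of $t=|x+h|$ alone, and its derivative $K_pt\bigl((|x|+|y|)^{p-2}-(t+|y|)^{p-2}\bigr)$ changes sign at $t=|x|$. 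So the relevant one-variable function lives in $|x+h|$, not along the segment.

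Watch the normalization too. If $c$ is the constant in $Q\le c(|k|^2-|h|^2)$, the jump bound can hold only with $c/2$; expand to second order with $h\perp x$ to see this. And $c/2$ is exactly what the factor $\frac12$ in front of $I_t$ requires. As written, your aim $-c|h|^2$ is false, and $\frac12I_t\le\int c\,(d[Y^c]-d[X^c])$ does not follow from your Hessian bound.

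\textbf{The range $1<p\le2$.} The paper runs no Bellman argument here. It obtains \eqref{A.2} from \eqref{A.1} by Proposition~\ref{duality}: write $\ang{Y,Z}=\int a\,d\ang{X^c}$ and test against the continuous martingale $W=\int a\,dX^c$, so simultaneous jumps never arise. Your direct route stops exactly where the work is. You need $\mathcal J_U(x,y;h,k)\le\frac12\Lambda'(x,y)|k|^2$ for all $(h,k)$, and you list this as the main obstacle without proving it. Your claim that $X$-jumps contribute at most $-c|\Delta X|^2$ is also false for $p<2$, since $U(x+h,y)$ decreases only like $-|h|^p$; fortunately only nonpositivity is needed.

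The route can in fact be completed by the mechanism of Proposition~\ref{jump prop}. For $p<2$, write $U(x,y)=u(|x|,|y|)$. Then $\nabla_yU=\Lambda'y$ with $\Lambda'=p^{3-p}(|x|+|y|)^{p-2}$, which is also the constant in the Hessian bound, and $\partial_ru\le0$. Moreover $F(r,t)=u(r,\sqrt t)$ is concave: with $s=\sqrt t$,
\[
D^2F(r,t)=p^{3-p}(r+s)^{p-3}\begin{pmatrix}(p-3)s-r&\frac{p-2}{2}\\ \frac{p-2}{2}&\frac{p-2}{4s}\end{pmatrix},
\]
and this matrix has negative diagonal and determinant $(2-p)(r+s)/(4s)\ge0$. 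Concavity of $F$, combined with $|x+h|-|x|\ge(x,h)/|x|$, gives the joint bound. The total drift is then at most $\frac12\E\int\Lambda'\,(d\ang{Y}-d\ang{X^c})\le0$. As submitted, however, the proposal contains neither this argument nor the duality, so the lower range is not established.
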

The two estimates are dual at conjugate exponents and the constants are already known to be sharp when both $X$ and $Y$ are continuous, a case covered by our hypotheses. A stronger comparison, in which the dominating bracket comes entirely from the opposite part of the continuous/purely discontinuous decomposition, yields a smaller optimal constant. 
\begin{atheorem}\label{main theorem2}
Let $X$ and $Y$ be $\Hi$-valued martingales such that $|Y_0|\leq |X_0|$. For $p\geq 2$, if $X$ is càdlàg, $Y$ is continuous and $d\langle Y\rangle\leq d\langle X^d\rangle$, then
\begin{equation}\label{B.1}
\|Y\|_p\leq z_{p'}^{-1}\|X\|_p.
\end{equation}
If $1<p\leq2$, let $Y$ be purely discontinuous and suppose that $d\langle Y\rangle\leq d\langle X^c\rangle.$ Then
\begin{equation}\label{B.2}
\|Y\|_p\leq z_p^{-1}\|X\|_p.
\end{equation}
Here, for $1<q\leq2$, $z_q$ denotes the largest positive zero of the parabolic cylinder function $D_q$. In both cases the constants are sharp and the inequalities are strict if $p\ne2$ and $0<\|X\|_p<\infty$.
\end{atheorem}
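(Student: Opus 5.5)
The plan is to run Burkholder's method with the special functions already used for orthogonal martingales. In that setting the constants $z_p^{-1}$ (for $1<p\le2$, dominating $X$ continuous) and $z_{p'}^{-1}$ (for $p\ge2$) are known to be sharp: Bañuelos–Wang for $p\geq 2$, and Osękowski-type results for the purely discontinuous $Y$ with continuous $X$ when $1<p\le2$. We work in the scalar-product form. For $1<p\le2$ take the Bellman function
\[
U(x,y)=\alpha_p|y|^p\ldots
\]
more concretely, the function built from $D_p$,
\[
U(x,y)=|y|^{p}\,\Phi\!\left(\frac{|x|}{|y|}\right)
\]
with $\Phi$ expressed through $e^{-s^2/4}D_p(s)$, rescaled at $z_p$ and extended by $|y|^p - z_p^{-p}|x|^p$ (up to sign conventions) outside the cone. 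The properties we need are the following.
\begin{enumerate}[label=\textup{(\alph*)}]
\item Majorization: $U(x,y)\ge |y|^p-z_p^{-p}|x|^p$.
\item Initial condition: $U(x,y)\le0$ when $|y|\le|x|$.
\item Heat-type inequality: $U_{xx}h\cdot h+U_{yy}k\cdot k\le0$ whenever $|k|\le|h|$, with both entries adapted to the relevant continuous/jump roles.
\end{enumerate}
For $p\ge2$ we use the dual function with the roles of $x$ and $y$ swapped, built from $D_{p'}$ and the constant $z_{p'}$.

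Next we apply Itô's formula to $U(X_t,Y_t)$ after localization and mollification. This produces the following terms.
\begin{enumerate}[label=\textup{(\roman*)}]
\item A local martingale.
\item A continuous term $\tfrac12\int U_{xx}\,d[X^c]+\tfrac12\int U_{yy}\,d[Y^c]$, with the cross term absent because only one of $X,Y$ has a continuous part in each case.
\item A jump sum $\sum_s\big(U(X_s,Y_s)-U(X_{s-},Y_{s-})-U_x\Delta X_s-U_y\Delta Y_s\big)$.
\end{enumerate}
In case \eqref{B.2} the continuous term involves only $X^c$ and the jumps come only from $Y$. In case \eqref{B.1} the continuous term comes only from $Y$ and the jumps only from $X$. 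The key step is to estimate the jump term from above by a quadratic quantity in the jump. For \eqref{B.1}, the relevant fact is that $U$ is concave in $x$ with $U_{xx}\ge -c(x,y)$ along segments. Following the compensation idea of \cite{dunklmine}, we bound
\[
U(x+h,y)-U(x,y)-U_x h\le \tfrac12\,\sup_{\text{segment}}\lambda(\cdot)\,|h|^2,
\]
which should really be $\le -\tfrac12\,\mu(x,y)|h|^2$ with $\mu$ the same weight that multiplies $d[Y^c]$ in the continuous term. We take predictable compensators: the jump sum is dominated by $-\int\mu\, d\langle X^d\rangle$, and the continuous part is $\int\mu\, d\langle Y\rangle$. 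The hypothesis $d\langle Y\rangle\le d\langle X^d\rangle$ then makes the total drift nonpositive. Taking expectations and letting $t\to\infty$ gives $\E|Y_t|^p\le z_{p'}^{-p}\E|X_t|^p$. The case \eqref{B.2} is symmetric: the jumps are in $y$ and we need $U(x,y+k)-U(x,y)-U_yk\le \tfrac12\nu(x,y)|k|^2$ with $\nu$ matching the weight of $d[X^c]$ in the continuous term.

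The main obstacle is this uniform quadratic jump bound with the exact weight. For arbitrarily large jumps the second derivative of $U$ along the segment varies, and one needs a global convexity/concavity comparison showing that the secant deficit never exceeds the infinitesimal one. The first approach I would try is to prove that $\partial_{yy}U(x,\cdot)$ (respectively $\partial_{xx}U$) in the radial and tangential directions is monotone along rays. That would give a Taylor remainder bound via its endpoint value, using the ODE satisfied by $D_p$, namely $D_p''-(s^2/4-p-\tfrac12)D_p=0$, together with the positivity of $D_p$ beyond $z_p$.

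For sharpness, it suffices to recall that the continuous/orthogonal examples, namely Brownian examples with time changes and compound-Poisson approximations to Brownian motion on the discontinuous side, satisfy our hypotheses and are known to attain $z_p^{-1}$, respectively $z_{p'}^{-1}$. For strictness when $p\neq2$, we examine when equality holds in (a): it holds only on the boundary $|y|=z|x|$-type set. In the jump bound equality forces infinitesimal jumps, so for $0<\|X\|_p<\infty$ a limiting argument shows that the drift defect is strictly negative on a set of positive measure.
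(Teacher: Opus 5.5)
Your architecture is the paper's: Os\k{e}kowski's parabolic-cylinder function, It\^o's formula, a continuous bound and a jump bound carrying the same weight, combined through predictable compensators. The proposal nevertheless has genuine gaps.

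First, you misread the hypotheses: in both parts $X$ is an arbitrary c\`adl\`ag martingale.
In \eqref{B.2}, $X$ may jump together with $Y$. So you need $\mathcal J_\B(x,y;h,k)\le\frac12\Lambda(x,y)|k|^2$ for all $(h,k)$, not only for $h=0$.
In \eqref{B.1}, $X^c$ may be nonzero and correlated with $Y$. The cross terms $D^2_{xy}U\,d\mathbf C_{X^c,Y}$ therefore do not drop out, and a direct proof would need the matrix inequality $D^2U\le\mu\operatorname{diag}(0,I)$ (which includes concavity in $x$). You never examine this for your ``swapped'' function.
The paper builds no upper-range Bellman function at all. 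It proves only \eqref{B.2}, where $Y$ purely discontinuous makes $d\mathbf C_\M$ reduce to the $X^c$ block, and transfers the bound to \eqref{B.1} through the duality proposition.

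Second, the decisive estimate is left as ``the main obstacle''. Monotonicity of second derivatives along rays would at best handle radial jumps. The missing idea is to use $t=|y|^2$ as a coordinate.
With $\Lambda_p=\partial_sV_p/s$ and $F_p(r,t)=V_p(r,\sqrt t)$, one has $\partial_tF_p(r,s^2)=\Lambda_p/2$. One then shows that $F_p$ is jointly concave. In the obstacle region this is explicit. In the parabolic-cylinder region, $D^2F_p$ equals the negative factor $\alpha_pp(p-1)t^{(p-2)/2}$ times a matrix. By the integral representation $\phi_{p-2}(q)=\Gamma(2-p)^{-1}\int_0^\infty v^{1-p}e^{-qv-v^2/2}\,dv$, that matrix has quadratic form $\Gamma(2-p)^{-1}\int_0^\infty(a+bv/(2\sqrt t))^2v^{1-p}e^{-qv-v^2/2}\,dv\ge0$.
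Concavity, together with $|y+k|^2-|y|^2=2(y,k)+|k|^2$, $\partial_rV_p\le0$ and $|x+h|-|x|\ge(x,h)/|x|$, yields exactly $\mathcal J_\B\le\frac12\Lambda_p|k|^2$ for arbitrary $(h,k)$.
The matching continuous bound is $D^2_{xx}\B\le-\Lambda_pI$, not your property (c), which is the heat condition for orthogonal continuous pairs. It follows from the identity $\partial_sV_p=-s\,\partial_{rr}V_p$ together with $\phi_p'''\le0$, and in the obstacle region from $z_p^2\le p-1$.

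Finally, sharpness and strictness are not established.
Take $Y$ a compound-Poisson approximation of a Brownian motion independent of $B$, and $X=B^\tau$. As the approximation is refined, $\|Y_\tau\|_p\to\|G\|_p\|\sqrt\tau\|_p$ with $G$ standard Gaussian. Since $\|G\|_p<1$ for $p<2$, such examples give at most $\|G\|_pz_p^{-1}$.
Reaching $z_p^{-1}$ requires a purely discontinuous $Y$ with $|Y_t|=\sqrt{\ang{Y}_t}$. The paper stops a parabolic Az\'ema martingale, independent of $B$, at a stopping time of $B$, and invokes Davis's theorem $\sup_\tau\|\sqrt\tau\|_p/\|B_\tau\|_p=z_p^{-1}$. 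Sharpness of \eqref{B.1} then follows by duality.
For strictness, ``equality forces infinitesimal jumps'' is not an argument. The paper shows that equality forces $\B(X_t,Y_t)\equiv0$, hence $|Y_t|=z_p^{-1}|X_t|$, and by Doob--Meyer uniqueness $\ang{Y}=z_p^{-2}\ang{X}$. This contradicts $d\ang{Y}\le d\ang{X^c}\le d\ang{X}$, because $z_p^{-1}>1$. The case $p>2$ again follows by duality, testing against $Z=|Y_\infty|^{p-2}Y_\infty/\|Y\|_p^{p-1}$.
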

As before, the two statements are dual to each other. The result is proved using the parabolic-cylinder Bellman function introduced by Os\k{e}kowski in \cite{OsekowskiEJP2011} in his study of sharp inequalities for orthogonal martingales. The connection with the orthogonal setting in the range $1<p\leq2$ is explained by the fact that $X^c$, whose predictable variation controls that of $Y$, is automatically orthogonal to $Y$ since the latter is purely discontinuous. Accordingly, the covariance terms disappear from the continuous part of the Bellman drift. The lower-range estimate may also be viewed as a jump analogue of the classical stopping-time inequality of Davis \cite{Davis1976}. Indeed, the Bellman function used by Os\k{e}kowski in this range is the one naturally associated with the Davis stopping-time problem. This connection also explains the sharpness of the constant $z_p^{-1}$. \\
We finally note that the assumptions of Theorems \ref{main theorem1} and \ref{main theorem2} are range-optimal and sharp with respect to the continuous/purely discontinuous decomposition: if either of the two bracket conditions is retained in the opposite range of \(p\), no \(L^p\) inequality with a finite universal constant can hold. We will discuss this in \Cref{sec: discussion of hypothesis}. \\

Our main application concerns the Riesz vectors on the Hamming
cube $\Ham_n$ and on $\Z^n$. Dimension-free Riesz estimates in discrete and noncommutative settings
have been studied in
\cite{LustPiquard1998,LustPiquard2004,Naor2016,JMP2018}. More recently, Domelevo, Ivanisvili, Petermichl, and Volberg \cite{domelevo2026}
gave a Bellman-function proof with the explicit bound
\[
    \|\mathbf Rf\|_{L^p(\Omega;\ell^2)}
    \leq1024(p-1)\|f\|_{L^p(\Omega)},
    \qquad 2\leq p<\infty,
\]
for $\Omega=\Ham_n,\Z^n$, and proved that linear growth in $p$ is
optimal on the Hamming cube. We apply Theorem \ref{main theorem2} to a martingale representation of the Riesz vector on the cube. For a given function $f$, the associated Poisson martingale has a Brownian part coming from the vertical coordinate and jumps generated by horizontal coordinate flips. The martingale representing the Riesz vector is continuous, and its predictable bracket satisfies the hypothesis of Theorem \ref{main theorem2}, although classical differential subordination fails in general. On the lattice $\Z^n$, we use the $2n$-dimensional vector of forward and backward Riesz transforms, as in \cite{domelevo2026}.
\begin{atheorem}
Let $p\geq2$ and $\Ham_n$ be the Hamming cube. We have
\[
\|\mathbf R f\|_{L^p(\Ham_n;\ell_n^2)}
\leq
\frac{2}{z_{p'}}\|f\|_{L^p(\Ham_n)}.
\]
For $\Z^n$, we have 
\[
\|\mathbf R f\|_{L^p(\Z^n;\ell_n^2)}
\leq
\frac{2\sqrt2}{z_{p'}}\|f\|_{L^p(\Z^n)}.
\]
\end{atheorem}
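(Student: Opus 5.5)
We reduce the statement to \eqref{B.1} of Theorem \ref{main theorem2} through a Gundy--Varopoulos type representation on $\Ham_n\times[0,\infty)$.

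\textbf{Setup.} Fix $f$ on $\Ham_n$ with mean zero; the constant part is annihilated by $\mathbf R$. Write $\Delta=\sum_{j=1}^n D_j$, where $D_jf(x)=\frac12\bigl(f(x)-f(x^{(j)})\bigr)$ and $x^{(j)}$ is $x$ with the $j$-th coordinate flipped. Let $u(x,y)=e^{-y\sqrt{\Delta}}f(x)$ be the Poisson extension; it satisfies $\partial_y^2u=\Delta u$. Run the process $Z_t=(\xi_t,B_t)$, where $\xi$ is the continuous-time random walk on $\Ham_n$ in which each coordinate flips at rate $\frac12$, so that its generator is $-\Delta$. The vertical component $B$ is a Brownian motion with $d\langle B\rangle_t=2\,dt$ started at height $y_0$ and stopped when it hits $0$. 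With $\xi_0$ uniform, $X_t=u(Z_{t\wedge\tau})$ is a martingale, since $(-\Delta+\partial_y^2)u=0$. Its continuous part is $\partial_yu\,dB$, with $d\langle X^c\rangle=2|\partial_yu|^2dt$. Its jumps occur when coordinate $j$ flips, with size $u(\xi^{(j)},B)-u(\xi,B)=-2D_ju$, at rate $\frac12$. Hence
\[
d\langle X^d\rangle=\sum_j 2|D_ju|^2\,dt.
\]

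\textbf{Continuous martingale for $\mathbf R$.} Following \cite{domelevo2026}, write $\mathbf R f=(D_j\Delta^{-1/2}f)_j$. Define the $\ell^2_n$-valued continuous martingale
\[
Y_t=\int_0^{t\wedge\tau}A(Z_s)\,dB_s,
\]
where $A_j(x,y)$ is chosen so that the terminal conditional expectation reproduces $R_jf$. Using $\partial_y^2=\Delta$ and an integration-by-parts/Littlewood--Paley identity in $y$, one expects a choice of the form $A_j=c\,D_j u(\cdot,y)$ or a nearby variant; the precise $A$ must be checked by this computation. Then $d\langle Y\rangle=2\sum_j|A_j|^2dt$.

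\textbf{Main obstacle.} The hypothesis $d\langle Y\rangle\le d\langle X^d\rangle$ must be verified pointwise in $x$, so the choice of $A$ must make $\sum_j|A_j|^2$ comparable to $\sum_j|D_ju|^2$, with possibly a shifted argument $\xi^{(j)}$. Here the mismatch between $D_j$ evaluated at $x$ and at $x^{(j)}$ is what costs the factor $2$.

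\textbf{Conditional expectation step.} Apply \eqref{B.1} to $Y/2$ and $X$; $Y_0=0$, so the initial condition holds. This gives $\|Y\|_p\le 2z_{p'}^{-1}\|X\|_p\le 2z_{p'}^{-1}\|f\|_p$. Then project: for any $g\in L^{p'}$, pair $Y_\tau$ against the martingale of $g$ and use the bracket $d\langle X^c\rangle$ versus $d\langle B\rangle$ identity. Letting $y_0\to\infty$ shows $\mathbb E[\,Y_\tau\mid\xi_\tau=x\,]=\mathbf Rf(x)$. Conditional expectation is an $L^p$ contraction, which yields the bound on $\Ham_n$.

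\textbf{Lattice case.} For $\Z^n$ run the same construction with the nearest-neighbour walk and the $2n$-vector of forward and backward differences. Forward and backward differences at a point each bound the jump along one edge. Summing the two families doubles the ratio of the brackets, which produces the extra factor $\sqrt2$.
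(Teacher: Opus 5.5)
Your outline follows the paper's route: the Poisson martingale $X$, a continuous martingale driven by the vertical Brownian motion, \eqref{B.1}, projection onto the terminal horizontal position, and then $y_0\to\infty$. There is, however, a genuine gap. You never fix the integrand $A$, so neither the bracket hypothesis needed for \eqref{B.1} nor the projection identity $\E[Y_\tau\mid\xi_\tau=x]=\mathbf Rf(x)$ is established, and these two facts alone determine the constant. The mechanism you offer for the factor $2$ is also wrong. There is no mismatch between $x$ and $x^{(j)}$, since $D_ju(x^{(j)},y)=-D_ju(x,y)$. Take $A_j=D_ju$, i.e.\ $M^j_t=\int_0^{t\wedge\tau}D_ju(\xi_{s-},B_s)\,dB_s$. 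The jumps $-2D_ju$ at rate $\frac12$ then give exactly $d\ang{M}=2\sum_j|D_ju|^2\,dt=d\ang{X^d}$, so \eqref{B.1} applies with no loss at all.

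The factor $2$ comes instead from the projection step. Pair with the Poisson martingale of $g$ and use the Green kernel $y_0\wedge y$ of the vertical process killed at $0$:
\[
\E\bigl[M^j_\tau\,g(\xi_\tau)\bigr]
=2\int_0^\infty(y_0\wedge y)\,\bigl\langle D_jP_yf,\partial_yP_yg\bigr\rangle_{L^2(\mu_n)}\,dy
=-\frac12\bigl\langle R_j(I-P_{2y_0})f,g\bigr\rangle_{L^2(\mu_n)},
\]
since $\int_0^\infty(y_0\wedge y)e^{-2y\sqrt\lambda}\,dy=(1-e^{-2y_0\sqrt\lambda})/(4\lambda)$. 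Your bookkeeping therefore closes at $2/z_{p'}$ only with $c=-2$. In that case $d\ang{Y}=4\,d\ang{X^d}$ and $\E[Y_\tau\mid\xi_\tau]=\mathbf R(I-P_{2y_0})f$, which is consistent, but for a different reason than the one you give. Taking $c=1$ and losing a factor in the bracket comparison, as your heuristic suggests, would yield $4/z_{p'}$.

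The same correction applies on $\Z^n$. Here $d\ang{X^d}$ already contains both $|\partial_j^+u|^2$ and $|\partial_j^-u|^2$, from the unit-rate jumps to $x\pm e_j$. So the $\sqrt2$ does not come from summing two families; it comes from $d\ang{B}=2\,dt$ against unit jump rates, i.e.\ $d\ang{M}=2\,d\ang{X^d}$ for $M^{j,\pm}=\int\partial_j^\pm u\,dB$. The remaining $2$ again comes from the projection. Stationarity must also be replaced by summing over starting points, using the invariance of counting measure.
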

The additional factors come from the representation of the Riesz vector used. One can obtain slightly better results by interpolating with $p=2$. Notice that $\|\textbf{R}\|_{L^2(\Z^n) \to L^2(Z^n,\ell^2_{2n})}=\sqrt{2}.$
\begin{acorollary}\label{cor}
For any $q \geq p>2$ let $\theta(p,q)=(q(p-2))/(p(q-2)).$ Then 
\begin{align*}
\|\textbf{R}\|_{L^p(\Ham_n; \ell^2_n)} &\leq \inf_{q \geq p} \left( \frac{2} {z_{q'}} \right)^{\theta(p,q)},\\ \|\mathbf R\|_{L^p(\Z^n;\ell_{2n}^2)}
& \leq
\sqrt2\left[
 \inf_{p\leq q<\infty}
 \left(\frac{2}{z_{q'}}\right)^{\theta(p,q)}
\right].\end{align*}
\end{acorollary}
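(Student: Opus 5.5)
This follows from the preceding theorem by Riesz--Thorin interpolation between $L^2$ and $L^q$; only the $L^2$ norms of $\mathbf R$ need to be computed separately. Fix $2<p\le q<\infty$ and write $1/p=(1-\theta)/2+\theta/q$. Solving gives
\[
\theta=\frac{1/2-1/p}{1/2-1/q}=\frac{q(p-2)}{p(q-2)}=\theta(p,q),
\]
which lies in $(0,1]$ since $p\le q$. The operator $\mathbf R$ is linear from scalar functions to $\ell^2$-valued functions. Riesz--Thorin holds for operators $L^{p}\to L^{p}(\ell^2)$, either via its vector-valued (Hilbert-space-valued) version or by the standard duality argument. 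It therefore gives
\[
\|\mathbf R\|_{L^p\to L^p(\ell^2)}\le \|\mathbf R\|_{L^2\to L^2(\ell^2)}^{1-\theta}\,\|\mathbf R\|_{L^q\to L^q(\ell^2)}^{\theta}.
\]
We insert the preceding theorem at exponent $q$, namely $\|\mathbf R\|_{L^q}\le 2/z_{q'}$ on $\Ham_n$ and $\le 2\sqrt2/z_{q'}$ on $\Z^n$. Then we take the infimum over $q\ge p$; the case $q=p$ recovers the theorem itself. Interpolation is done over the complex scalars, so we complexify. Because $\mathbf R$ is real, the complexified operator has comparable norm, and in the Hilbert-valued case the norm is unchanged.

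The only step needing care is the $L^2$ norm. On $\Ham_n$, using the Walsh expansion and the definition of $\mathbf R$ from \cite{domelevo2026}, we have $R_jf=\sum_S \frac{\varepsilon_j(S)}{\sqrt{|S|}}\hat f(S)w_S$ or an equivalent form, where $\varepsilon_j(S)$ indicates whether $j\in S$ and $\hat f(\emptyset)$ is removed. Then $\sum_j|R_jf|^2$ integrates to $\sum_{S\ne\emptyset}|\hat f(S)|^2\le\|f\|_2^2$ by Parseval, so $\|\mathbf R\|_{L^2}\le1$. This gives the factor $1^{1-\theta}=1$ and hence the first inequality.

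On $\Z^n$ the vector has $2n$ components, forward and backward, $R_j^{\pm}=\partial_j^{\pm}(-\Delta)^{-1/2}$, with Fourier symbols of modulus $|1-e^{\pm i\xi_j}|/\sigma(\xi)^{1/2}$, where $\sigma(\xi)=\sum_j|1-e^{i\xi_j}|^2$. Summing over $j$ and over both signs gives $2$, so by Plancherel the $L^2$ norm equals $\sqrt2$, as stated just before the corollary. Interpolation then yields
\[
\sqrt2^{\,1-\theta}\Bigl(\tfrac{2\sqrt2}{z_{q'}}\Bigr)^{\theta}=\sqrt2\Bigl(\tfrac{2}{z_{q'}}\Bigr)^{\theta}.
\]
The $\sqrt2$ factors out, and taking the infimum in $q$ gives the second bound.
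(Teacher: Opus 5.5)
Your proposal follows the same route as the paper: Riesz--Thorin between the $L^2$ bound (norm $1$ on $\Ham_n$, $\sqrt2$ on $\Z^n$) and the $L^q$ bound of the preceding theorem, with $\theta(p,q)$ as the interpolation exponent, followed by an infimum over $q\geq p$. The one step to tighten is complexification. A real operator into $L^q(H)$ can have a complexification of strictly larger norm in general (e.g.\ $f\mapsto\sum_{j=1}^3 f_j v_j$ from $\ell^q_3$ to $\ell^2_2$, with $v_j$ unit vectors at angles $2\pi j/3$, once $q$ is large), so the reason the complex interpolation costs nothing here is different: Theorem~\ref{main theorem2} is Hilbert-valued, so the Poisson-martingale representation applies verbatim to $\R^2\cong\mathbb{C}$-valued $f$ and gives the same constants $2/z_{q'}$ and $2\sqrt2/z_{q'}$ for the complexified $\mathbf R$.
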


 Corollary \ref{cor} gives a mild improvement for $p$ close to $2$. Asymptotically, we have
\[
 \frac2{z_{p'}}\sim\sqrt{\frac8\pi}\,(p-1)
 \approx1.596(p-1),\qquad p\to\infty.
\]
Dimension-free bounds for Riesz vectors fail when $1<p<2$;
see \cite{LustPiquard1998}. The same predictable bracket
comparison for the Poisson martingales remains valid independently of
$p$, but it cannot imply
a universal estimate in that range. The lower-range hypotheses require
a different comparison of continuous and jump variation and do not
hold for these representations in general. This explains why the
exponent restrictions in the martingale results agree with the
obstruction in the applications. We expect this approach to extend verbatim to
the locally compact abelian groups considered in
\cite{LustPiquard2004}.
\subsection{Structure of the paper}
Section~\ref{sec: discussion of hypothesis} discusses the hypotheses and proves the duality between the two exponent ranges. Section~\ref{sec:reductions} records the reductions and the common Bellman strategy. Theorems~\ref{main theorem1} and \ref{main theorem2} are proved in Sections~\ref{sec:proof-theorem-A} and \ref{sec:proof-theorem-B}, respectively. The applications to discrete Riesz vectors are presented in Section~\ref{sec:applications}.
\subsection{Acknowledgments} I would like to thank José Conde Alonso and Guillermo Rey for organizing the summer school “New Trends in Harmonic Analysis,” which I attended in Madrid in May 2026. I believe the school has already had, and will continue to have, a tremendous impact on our community (greater, I like to think, than that of AI) and in particular on this project. I am also grateful to Stefanie Petermichl, whose course at the summer school introduced me to the Riesz vector problem on the Hamming cube and to her work with her collaborators, for being very generous with her time and very kind in answering my questions.
\section{Discussion on the subordination hypotheses}\label{sec: discussion of hypothesis}
We start by establishing three points:
\begin{enumerate}
    \item Predictable subordination is strictly weaker than
    classical differential subordination and allows comparisons
    between continuous and jump variation.

    \item The hypotheses in the $p\geq2$ parts of
    Theorems \ref{main theorem1} and \ref{main theorem2}
    cannot yield a universal $L^p$ inequality when $1<p<2$.
    Likewise, the hypotheses in their $1<p\leq2$ parts
    cannot yield such an inequality when $p>2$.

    \item For both theorems, the upper and lower range estimates
    are dual at conjugate exponents, and this duality preserves
    the optimal constants.
\end{enumerate}
The first two points are cleanly explained by the following classical example. 
For $t \in [0,1]$, let $B_t$ be a standard Brownian motion and let $N_t$ be a Poisson process with rate $\lambda >0$ such that $N_0=0$. Consider the normalized compensated Poisson process $$\widetilde{N}^\lambda_t= \frac{N_t-\lambda t}{\sqrt \lambda}. $$ Then we have
\[
 \ang{B}_t=[B]_t=\ang{\widetilde N^\lambda}_t=t,
 \qquad
 [\widetilde N^\lambda]_t=\frac{N_t}{\lambda}.
\]
Thus $B$ is predictably subordinated to $\widetilde N^\lambda$,
but differential subordination fails, since $B$ has
nonzero continuous quadratic variation while
$\widetilde N^\lambda$ is purely discontinuous. Also, for $1<p<\infty$
\[
 \|\widetilde N^\lambda\|_p
 \sim\lambda^{1/p-1/2},
 \qquad \lambda\to0,
\]
whereas $\|B\|_p$ is positive and independent of $\lambda$.
For $1<p<2$, take $X=\widetilde N^\lambda$ and $Y=B$.
The hypotheses of the upper-range parts of both theorems hold,
but $\|X\|_p\to0$ while $\|Y\|_p$ remains fixed.
For $p>2$, interchange the roles and take
$X=B$ and $Y=\widetilde N^\lambda$.
The hypotheses of the lower-range parts hold,
but now $\|Y\|_p\to\infty$ while $\|X\|_p$ remains fixed.
In both cases,
\[
 \frac{\|Y\|_p}{\|X\|_p}\longrightarrow\infty.
\]
Hence, neither set of hypotheses yields a finite
universal constant in the opposite exponent range. Recall that $(\cdot,\cdot)$ denotes the inner product of $\Hi$ and $|\cdot|$ its norm. We now prove the duality. 
\begin{proposition}\label{duality}
Under the respective hypotheses, we have that \eqref{A.1} holds for $p \geq 2$ if and only if \eqref{A.2} holds for $1<p\leq 2$. Analogously, \eqref{B.1} holds for $p \geq 2$ if and only if \eqref{B.2} holds for $1<p \leq 2$.
\end{proposition}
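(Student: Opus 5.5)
We prove each equivalence by the standard $L^p$--$L^{p'}$ duality argument, adapted to the predictable brackets. Consider first \eqref{A.1}$\Rightarrow$\eqref{A.2}. Fix $1<p\leq2$ and martingales $X,Y$ with $|Y_0|\leq|X_0|$, $Y$ càdlàg, and $d\ang{Y}\leq d\ang{X^c}$. By truncation and stopping we may assume that $X$, $Y$ are bounded in $L^2$ and that the time horizon is a finite $T$, so that $\|Y\|_p=\|Y_T\|_{L^p}$ up to a limiting argument. Choose $\xi=|Y_T|^{p-1}Y_T/|Y_T|\,\|Y_T\|_p^{1-p}$, which satisfies $\|\xi\|_{p'}=1$ and $\E(\xi,Y_T)=\|Y_T\|_p$. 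Let $\eta_t=\E[\xi\mid\F_t]$, which is an $L^{p'}$ martingale with $p'\geq2$.

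The key construction is a continuous martingale $W$ that is predictably subordinated to $\eta$ and satisfies $\E(W_T,X_T)\geq\E(\eta_T,Y_T)$. We build $W$ as a stochastic integral against $X^c$. Write $\eta=\eta_0+\eta^c+\eta^d$. By Kunita--Watanabe, the covariation $d\ang{\eta,Y}$ is absolutely continuous with respect to $d\ang{X^c}$ (since $d\ang{Y}\leq d\ang{X^c}$), so there is a predictable operator-valued density $H$ with $d\ang{\eta,Y}=d\ang{H\cdot X^c,\cdot}$-type representation. Concretely, we set $W=\eta_0'+\int K\,dX^c$, where $K$ is chosen so that $d[W,X^c]=d\ang{\eta,Y}$ and $d\ang{W}\leq d\ang{\eta}$. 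The latter inequality comes from Cauchy--Schwarz: $|d\ang{\eta,Y}|\leq (d\ang{\eta})^{1/2}(d\ang{Y})^{1/2}\leq(d\ang{\eta})^{1/2}(d\ang{X^c})^{1/2}$. To make this rigorous, it is cleanest to work in a coordinate basis and apply a measurable-selection or Radon--Nikodym argument for Hilbert-valued brackets. Here $\eta_0'$ is a projection of $\eta_0$ onto the direction of $X_0$, chosen so that $(\eta_0',X_0)=(\eta_0,Y_0)$ and $|\eta_0'|\leq|\eta_0|$; this uses $|Y_0|\leq|X_0|$.

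With this $W$ in hand, we have $\E(W_T,X_T)=\E(\eta_0',X_0)+\E\ang{W,X^c}_T=\E(\eta_0,Y_0)+\E\ang{\eta,Y}_T=\E(\eta_T,Y_T)$. Since $W$ is continuous with $|W_0|\leq|\eta_0|$ and $d\ang{W}\leq d\ang{\eta}$, applying \eqref{A.1} with exponent $p'$ gives
\[
\|Y\|_p=\E(W_T,X_T)\leq\|W\|_{p'}\|X\|_p\leq(p'-1)\|X\|_p=\frac{1}{p-1}\|X\|_p .
\]
The converse \eqref{A.2}$\Rightarrow$\eqref{A.1} follows symmetrically. Given continuous $Y$ with $d\ang{Y}\leq d\ang{X}$ and $p\geq2$, we take $\eta$ dual to $Y_T$ and build $W=\eta_0'+\int K\,dX$ as a stochastic integral against the whole martingale $X$, using $d\ang{\eta,Y}=d\ang{\eta^c,Y}$, which holds because $Y$ is continuous. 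We can even take $W$ to be an integral against $X^c$ plus a compensated part against $X^d$. Either way, $d\ang{W}\leq d\ang{\eta^c}$ is what \eqref{A.2} requires, so here we must ensure that $W$'s bracket is dominated by the continuous part of $\eta$. Because $Y$ is continuous, only $\eta^c$ pairs with $Y$, and the Cauchy--Schwarz bound yields $|d\ang{\eta,Y}|\leq(d\ang{\eta^c})^{1/2}(d\ang{X})^{1/2}$, which is exactly the domination needed. Here the roles are reversed relative to the first direction: $W$ is subordinated to $\eta$ and paired against $X$, and \eqref{A.2} is applied to the pair $(\eta,W)$.

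For the B statements the same scheme applies, with the bracket bookkeeping refined. In \eqref{B.1}$\Rightarrow$\eqref{B.2}, $Y$ is purely discontinuous, $d\ang{Y}\leq d\ang{X^c}$, and we construct a continuous $W$, which must satisfy $d\ang{W}\leq d\ang{\eta^d}$. This works because $Y$ purely discontinuous implies $\ang{\eta,Y}=\ang{\eta^d,Y}$. In \eqref{B.2}$\Rightarrow$\eqref{B.1}, we need a purely discontinuous $W$ with $d\ang{W}\leq d\ang{\eta^c}$ pairing against $X^d$, which is realised as a stochastic integral against $X^d$. The main obstacle is the construction of $W$. The stochastic integral must reproduce the predictable covariation $d\ang{\eta,Y}$ exactly, with bracket control, through a Radon--Nikodym density of predictable brackets. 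This is delicate when the integrator is purely discontinuous, since $\ang{\int K\,dX^d}$ involves the compensator of the jump measure rather than a simple product. We handle this by choosing $K$ via the Kunita--Watanabe density with respect to $d\ang{X^d}$ and verifying the bracket inequality at the predictable level. Sharpness of the constants transfers through the same duality.
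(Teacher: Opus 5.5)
Your proposal is correct and follows the paper's argument. In each direction you take the Kunita--Watanabe density $a$ of $\ang{\eta,Y}$ (which reduces to $\ang{\eta^c,Y}$ or $\ang{\eta^d,Y}$ when $Y$ is continuous or purely discontinuous) with respect to the dominating bracket, and set $W=W_0+\int a\,dX^c$ (resp.\ $\int a\,dX$, $\int a\,dX^d$) with the paper's initial correction $W_0=\mathbf 1_{\{|X_0|>0\}}(Y_0,\eta_0)|X_0|^{-2}X_0$. You then conclude by H\"older, including the converse directions the paper only sketches. As in the paper, a scalar predictable density suffices, so $\ang{\int a\,dX^d}=\int a^2\,d\ang{X^d}$ holds directly and you need no operator-valued integrands, measurable selection, or delicate jump-compensator bookkeeping.
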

\begin{proof} 
Assume first that every martingale starts at zero; the adaptation for the general case is given at the end. Fix $1<p\leq2$ and let $q=p'$. We first show that if \eqref{A.1} holds for
$q\geq2$, then \eqref{A.2} holds for $p$. Let $Z\in L^q(\Omega;\Hi)$ with
$\|Z\|_q=1$ and set
$$
Z_t:=\E[Z\mid\F_t].
$$
Localize, if needed, so that every bracket process is square integrable. By integration by parts and the fact that $Y_0=0$,
$$
\E(Y_T,Z)
=
\E(Y_T,Z_T)
=
\E[Y,Z]_T
=
\E\ang{Y,Z}_T.
$$

Set
$
A_t:=\ang{X^c}_t+\ang{Z}_t.
$
By Radon-Nikodym, there exist
scalar predictable processes $x,y,z,c$ such that
$$
\ang{X^c}_t=\int_0^t x_s\,dA_s,\qquad
\ang{Y}_t=\int_0^t y_s\,dA_s,\qquad
\ang{Z}_t=\int_0^t z_s\,dA_s,\qquad 
\ang{Y,Z}_t=\int_0^t c_s\,dA_s.
$$
Predictable subordination implies
$0\leq y_s\leq x_s$ almost surely, and Kunita-Watanabe gives
$$
c_s^2\leq y_sz_s\leq x_sz_s.
$$
Define
$$
a_s:=
\begin{cases}
c_s/x_s, & x_s>0,\\
0, & x_s=0.
\end{cases}
$$
Since $c_s=0$ on $\{x_s=0\}$, we have
$$
\ang{Y,Z}_t
=
\int_0^t a_s\,d\ang{X^c}_s, \qquad
\int_0^t a_s^2\,d\ang{X^c}_s
\leq
\ang{Z}_t.
$$

Therefore, the $\Hi$-valued continuous martingale defined by
$$
W_t:=\int_0^t a_s\,dX_s^c
$$
satisfies
$
\ang{W}_t
\leq
\ang{Z}_t
$
and \eqref{A.1} implies
$
\|W\|_q
\leq
(q-1).
$ By integration by parts,
$$
|\E(Y_T,Z)|=|\E(X_T,W_T)|
\leq
\|X_T\|_p\|W_T\|_q
\leq
(q-1)\|X_T\|_p,
$$
Taking the supremum over $Z$ and then over $T$ proves \eqref{A.2}. Following the same construction, one can prove that \eqref{A.2} implies \eqref{A.1} and we omit the details. \\
Now, let again $1<p\leq2$ and $q=p'$. Assuming \eqref{B.1}, take $Z\in L^q(\Omega;\Hi)$ as before. Since $Y$ is purely discontinuous, $\ang{Y,Z}=\ang{Y,Z^d}$ and, arguing as above, the Kunita-Watanabe inequality and $\ang{Y}\leq\ang{X^c}$ yield a predictable scalar process $a$ such that
$$
\ang{Y,Z}_t=\int_0^t a_sd\ang{X^c}_s,
\qquad
\int_0^t a_s^2 d\ang{X^c}_s\leq\ang{Z^d}_t.
$$
Thus $W_t:=\int_0^t a_sdX_s^c$ is continuous and satisfies $\ang{W}\leq\ang{Z^d}$. By \eqref{B.1},
$$
\|W\|_q\leq z_{q'}^{-1}\|Z\|_q=z_p^{-1}\|Z\|_q.
$$
As in the previous duality argument, integration by parts gives $\E(Y_T,Z_T)=\E(X_T,W_T)$, and \eqref{B.2} follows by duality. The converse implication follows in the same way, interchanging the continuous and purely discontinuous parts.

The same construction handles nonzero initial values. In each direction,
replace $W_0=0$ by
\[
 W_0=\mathbf1_{\{|X_0|>0\}}
       \frac{(Y_0,Z_0)}{|X_0|^2}X_0.
\]
Then $|W_0|\leq|Z_0|$ and $(X_0,W_0)=(Y_0,Z_0)$, so both the
initial domination and the integration-by-parts identity are preserved.
The rest follows as before.
\end{proof}
\section{Reductions and the general strategy}\label{sec:reductions}
In the rest of the paper, we will assume that $\Hi$ is a finite dimensional Hilbert space; in particular we will write $\Hi=\R^n $ for some $n$. For the approximation argument to general infinite dimensional Hilbert spaces follows the usual
procedure in \cite{W95}.
To emphasize the main ideas of the
proofs, we also omit the usual regularization procedure for the Bellman
functions. The functions involved are smooth away from their singular sets $S_\B$, and
the relevant differential and finite-difference inequalities are preserved under
convolution. Thus, one may first apply the argument to a smooth mollification and
then pass to the limit in the standard way. This procedure has been carried out
in full detail in many related works, and in particular in a recent paper of the
author \cite{dunklmine}, where a similar approach was used; we refer there for
further details. Finally, in view of the duality arguments above, for each pair
of estimates it suffices to establish the corresponding Bellman estimate in one
of the two exponent ranges; the estimate in the dual range then follows
immediately. \\
\subsection{Jump measure and compensator} We recall some useful classical facts, for which we refer to \cite{protter, cohell}. 
Let $\mu^{X,Y}$ be the joint jump measure of $(X,Y)$
$$
\mu^{X,Y}(ds,dh,dk)
=
\sum_{s>0}
\mathbf 1_{\{(\Delta X_s,\Delta Y_s)\neq(0,0)\}}
\delta_{(s,\Delta X_s,\Delta Y_s)}(ds,dh,dk),
$$
and let $\nu^{X,Y}$ denote its predictable compensator. In other words, for every predictable integrand
\[
 \E\int H(s,h,k)\,\mu^{X,Y}(ds,dh,dk)
 =\E\int H(s,h,k)\,\nu^{X,Y}(ds,dh,dk)
\]
holds when both sides are well defined. In particular, for locally square-integrable martingales
$$
[X^d]_t
=
\int_{(0,t]\times\Hi\times\Hi}
|h|^2\mu^{X,Y}(ds,dh,dk),
\qquad
[Y^d]_t
=
\int_{(0,t]\times\Hi\times\Hi}
|k|^2\mu^{X,Y}(ds,dh,dk),
$$
and
$$
[X^d,Y^d]_t
=
\int_{(0,t]\times\Hi\times\Hi}
(h,k)\mu^{X,Y}(ds,dh,dk).
$$
Passing to predictable compensators yields instead
$$
\ang{X^d}_t
=
\int_{(0,t]\times\Hi\times\Hi}
|h|^2\,\nu^{X,Y}(ds,dh,dk),
\qquad
\ang{Y^d}_t
=
\int_{(0,t]\times\Hi\times\Hi}
|k|^2\,\nu^{X,Y}(ds,dh,dk),
$$
as well as
$$
\ang{X^d,Y^d}_t
=
\int_{(0,t]\times\Hi\times\Hi}
(h,k)\,\nu^{X,Y}(ds,dh,dk).
$$
\subsection{Outline of the proofs}
Let $\B=\B_p$ be the Bellman function associated with one of the main results,
where $p$ is in the appropriate range specified below, and let $\mathcal O$
denote the corresponding Bellman obstacle. After localization and
regularization, It\^o's formula gives
$$
\E\B(X_T,Y_T)
=
\E\B(X_0,Y_0)+\E\mathcal N_T+\E\mathcal I_1+\E\mathcal I_2,
$$
where $\B(X_0,Y_0)\leq0$, $\mathcal N$ is a local martingale starting from
zero, and $\mathcal I_1$ and $\mathcal I_2$ denote respectively the continuous
and jump contributions. Set $\M=(X^c,Y^c)$ and let $d\mathbf C_\M(s)$ be its matrix-valued predictable covariance measure. In block form,
$$
d\mathbf C_\M(s)
=
\begin{pmatrix}
 d\mathbf C_{X^c}(s) & d\mathbf C_{X^c,Y^c}(s)\\
 d\mathbf C_{Y^c,X^c}(s) & d\mathbf C_{Y^c}(s)
\end{pmatrix},
$$
where
\begin{align*}
d\mathbf C_{X^c}(s)&=\bigl(d\langle X^{c,i},X^{c,j}\rangle_s\bigr)_{i,j},
\qquad
 d\mathbf C_{Y^c}(s)=\bigl(d\langle Y^{c,i},Y^{c,j}\rangle_s\bigr)_{i,j}, \\ 
 d\mathbf C_{X^c,Y^c}(s)&=\bigl(d\langle X^{c,i},Y^{c,j}\rangle_s\bigr)_{i,j},
\qquad
 d\mathbf C_{Y^c,X^c}(s)=d\mathbf C_{X^c,Y^c}(s)^{\mathsf T}.
\end{align*}
The continuous and jump contributions can be written as
\begin{align*}
\mathcal I_1
&=
\frac12\int_0^T
\operatorname{Tr}\left(
D^2\B(X_{s-},Y_{s-})d\mathbf C_\M(s)
\right), \\
\mathcal I_2 &=\sum_{0<s\leq T}\mathcal J_\B(X_{s-},Y_{s-};\Delta X_s,\Delta Y_s)
\end{align*}
where $$D^2\B(x,y)
=
\begin{pmatrix}
D^2_{xx}\B(x,y) & D^2_{xy}\B(x,y)\\
D^2_{yx}\B(x,y) & D^2_{yy}\B(x,y)
\end{pmatrix}, $$ and 
$$
\mathcal J_{\B}(x,y;h,k)
:=
\B(x+h,y+k)-\B(x,y)
-D_x\B(x,y)h-D_y\B(x,y)k.
$$
In the classical subordination case, the continuous
and jump contributions are controlled separately and are both nonpositive. Here
this need not be the case and we need to study the two terms together using the identities above. More precisely, the compensation formula gives
$$
\E\mathcal I_2
=
\E\int_{(0,T]\times\Hi\times\Hi}
\mathcal J_{\B}(X_{s-},Y_{s-};h,k)\,
\nu^{X,Y}(ds,dh,dk).
$$

In each of the cases considered below the continuous estimate
takes the form
$$
\E\mathcal I_1
\leq
\E\int_{(0,T]\times\Hi\times\Hi}
\mathcal C_{\B}(X_{s-},Y_{s-};h,k)\,
\nu^{X,Y}(ds,dh,dk)
$$
for a suitable function $\mathcal C_{\B}$.
The proof is therefore reduced to showing that
$$
\mathcal C_{\B}(x,y;h,k)+\mathcal J_{\B}(x,y;h,k)\leq0
$$
which implies that 
$$
\E(\mathcal I_1+\mathcal I_2)\leq0.
$$
Together with $\B(X_0,Y_0)\leq0$ and the obstacle inequality
$\mathcal O\leq\B$, this yields the desired estimate.

\section{Proof of Theorem \ref{main theorem1}}
\label{sec:proof-theorem-A}

By \cref{duality}, it is enough to prove the estimate in the range $p\geq 2$. In view of the reductions discussed above, we work in $\mathbb R^n$ and carry out the differential computations away from the singular sets of the Bellman function. Let $X$ be an $\mathbb R^n$-valued càdlàg martingale and let $Y$ be an $\mathbb R^n$-valued continuous martingale such that
$
d\langle Y\rangle\leq d\langle X\rangle.
$
Set
$$
C_p:=p\left(1-\frac1p\right)^{p-1},
\qquad
K_p:=C_pp(p-1),
$$
and consider the classical Burkholder function
$$
\B(x,y):=
C_p\bigl(|y|-(p-1)|x|\bigr)(|x|+|y|)^{p-1},
\qquad x,y\in\mathbb R^n.
$$
We recall the standard obstacle inequality
$$
\B(x,y)\geq |y|^p-(p-1)^p|x|^p.
$$

Let $\mathcal I_1$ and $\mathcal I_2$ denote respectively the continuous and jump contributions in It\^o's formula. Since $Y$ is continuous, the jumps of $(X,Y)$ are of the form $(h,0)$. Then
\begin{align*}
\mathcal I_1
&=
\frac12\int_0^T
\operatorname{Tr}\left(
D^2\B(X_{s-},Y_s)\,d\mathbf C_\M(s)
\right), \\
\mathcal I_2
&=
\sum_{0<s\leq T}
\mathcal J_\B(X_{s-},Y_s;\Delta X_s),
\end{align*}
where
\begin{align*}
\mathcal J_\B(x,y;h) =
\B(x+h,y)-\B(x,y)-\bigl(\nabla_x\B(x,y),h\bigr).    
\end{align*}
If $\nu^X$ denotes the predictable compensator of the jump measure of $X$, then 
$$
\mathbb E\mathcal I_2
=
\mathbb E\int_{(0,T]\times\mathbb R^n}
\mathcal J_\B(X_{s-},Y_s;h)\,\nu^X(ds,dh).
$$

\subsection{The Bellman estimates}

Define
$$
\Lambda_p(x,y):=
K_p(|x|+|y|)^{p-2}.
$$
For $p>2$ we set $\Lambda_p(0,0)=0$, while $\Lambda_2\equiv2$. We first record the following fact:
\[
D^2\B(x,y)
+
\Lambda_p(x,y)
\begin{pmatrix}
I_n&0\\
0&-I_n
\end{pmatrix}
\leq0.
\]
This estimate is implicit in Burkholder's argument and can indeed be deduced from property $(c)$ in \cite{W95}, page 527. Since $d\mathbf C_M$ is positive semidefinite,
\begin{align*}
2\mathcal I_1
&\leq
\int_0^T
\Lambda_p(X_{s-},Y_s)
\operatorname{Tr}\left[
\begin{pmatrix}
-I_n&0\\
0&I_n
\end{pmatrix}
d\mathbf C_\M(s)
\right] =
\int_0^T
\Lambda_p(X_{s-},Y_s)
\bigl(
d\langle Y\rangle_s-d\langle X^c\rangle_s
\bigr).
\end{align*}
Finally, $d\ang{X}=d\ang{X^c}+d\ang{X^d}$ and predictable subordination give 
\begin{align}
2\mathcal I_1 \leq
\int_0^T
\Lambda_p(X_{s-},Y_s)\,d\langle X^d\rangle_s.
\label{eq:I1-defect}
\end{align}

We next turn to the jump contribution. Here the relevant estimate is a finite-difference inequality; its proof reduces to a one-variable calculation by the radiality of $\B$ in its first argument.
\begin{lemma}
\label{lem:jump-burkholder}
For every $x,y,h\in\mathbb R^n$,
\[
\mathcal J_\B(x,y;h)
\leq
-\frac12\Lambda_p(x,y)|h|^2.
\]\end{lemma}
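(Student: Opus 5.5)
The plan is to exploit the radiality of $\B$ in $x$: reduce $\mathcal J_\B$ to a one-variable function of $r=|x+h|$, prove a sharp one-dimensional Taylor-type inequality, and then check that the non-radial parts of $h$ cancel exactly. Fix $y$ and set $b=|y|$, $s=|x|$, $r=|x+h|$. We have $\B(x,y)=\varphi(|x|)$ with
\[
\varphi(\rho)=C_p\bigl(b-(p-1)\rho\bigr)(\rho+b)^{p-1}.
\]

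\emph{Step 1: the derivative of $\varphi$.} A direct differentiation gives
\[
\varphi'(\rho)=-C_pp(p-1)\,\rho(\rho+b)^{p-2}=-K_p\,\rho(\rho+b)^{p-2}.
\]
In particular $-\varphi'(s)=\Lambda_p(x,y)\,s$, and $\nabla_x\B(x,y)=\varphi'(s)\,x/s$ for $s>0$. Writing $u:=(x,h)/s$, the gradient term in $\mathcal J_\B$ equals $\varphi'(s)u$. Also
\[
|h|^2=r^2-s^2-2su .
\]

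\emph{Step 2: the one-dimensional inequality.} I will show that for all $r,s\ge0$,
\[
\varphi(r)-\varphi(s)-\varphi'(s)(r-s)\le-\tfrac12\Lambda_p(x,y)(r-s)^2 .
\]
Put $g(\tau)=\tau(\tau+b)^{p-2}$. The left side equals $-K_p\int_s^r\bigl(g(\tau)-g(s)\bigr)\,d\tau$. It therefore suffices to have the secant bound
\[
\bigl(g(\tau)-g(s)\bigr)(\tau-s)\ge(s+b)^{p-2}(\tau-s)^2 ,
\]
since integrating it produces the factor $\tfrac12(r-s)^2$.

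For $\tau\ge s$, the secant bound reduces to $\tau(\tau+b)^{p-2}\ge\tau(s+b)^{p-2}$. For $\tau\le s$, it reduces to $\tau(s+b)^{p-2}\ge\tau(\tau+b)^{p-2}$. Both hold because $p\ge2$ and $\tau\ge0$. The case $p=2$ is included, with $\Lambda_2=2=K_2$.

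\emph{Step 3: cancellation of the non-radial part.} I split the gradient term as
\[
\varphi'(s)u=\varphi'(s)(r-s)+\varphi'(s)\bigl(u-(r-s)\bigr).
\]
Step 2 together with $-\varphi'(s)=\Lambda_p s$ then gives
\[
\mathcal J_\B+\tfrac12\Lambda_p|h|^2\le\tfrac12\Lambda_p\Bigl[-(r-s)^2+2s\bigl(u-r+s\bigr)+r^2-s^2-2su\Bigr].
\]
The bracket vanishes identically, which yields the lemma for $s>0$.

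For $s=0$, the gradient vanishes because $\varphi'(0)=0$, and $|h|=r$. The claim then follows directly from Step 2 with $s=0$. When $p>2$ and $(x,y)=(0,0)$, the right-hand side is $0$, and $\B(h,0)=-C_p(p-1)|h|^p\le0$ gives the claim.

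The main obstacle I anticipate is the mismatch between the Hessian bound along the segment, which involves $\Lambda_p$ at the moving point, and the fixed $\Lambda_p(x,y)$ required in the statement. Naive concavity fails here when $|x+th|<|x|$. The secant comparison in Step 2, which uses only monotonicity of $(\tau+b)^{p-2}$ and not convexity of $g$, is what I expect to resolve this. The exact identity in Step 3 is what lets the radial reduction cost nothing.
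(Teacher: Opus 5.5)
Your proof is correct and is essentially the paper's argument. The paper uses $\nabla_x\B(x,y)=-\Lambda_p(x,y)\,x$ to rewrite the claim directly as $F(|x+h|)\le0$ with $F(t)=\Phi(t)-\Phi(|x|)+\frac12\Lambda_p(x,y)(t^2-|x|^2)$; your Step 3 identity is exactly this rewriting, done after the radial Taylor estimate rather than before it, and your Step 2 secant bound is precisely the sign condition $F'(\tau)(\tau-|x|)\le0$, which both you and the paper derive from the monotonicity of $(\tau+|y|)^{p-2}$ for $p\ge2$.
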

\begin{proof}
For $p=2$ the claim trivially holds with equality. Hence, assume that $p>2$ and fix $\Lambda= \Lambda_p(x,y)$.  Since
$
\nabla_x\B(x,y)=-\Lambda x,
$
the desired inequality corresponds to \[\B(x+h,y)-\B(x,y)- \frac \Lambda 2 (|x|^2-|x+h|^2) \leq 0.\]
Fix $z:=|y|$ and denote $r:=|x|$. We have that $\B(x,y)=\Phi(r)$, where
$$
\Phi(t):=C_p\bigl(z-(p-1)t\bigr)(t+z)^{p-1}, \quad t\geq 0.
$$
 For $t\geq0$, define
$$
F(t):=\Phi(t)-\Phi(r)+\frac12\Lambda(t^2-r^2).
$$
For $t=|x+h|$, we have $F(t)=\mathcal J_{\B}(x,y;h)+\frac12\Lambda|h|^2$, so we only need to prove that $F(t) \leq 0$. Recalling
$$
\Phi'(t)=-K_pt(t+z)^{p-2}, \qquad \Lambda:=\Lambda_p(x,y)=K_p(r+z)^{p-2}$$
 we have that 
$$
F'(t)
=
\Phi'(t)+\Lambda t
=
K_pt\left((r+z)^{p-2}-(t+z)^{p-2}\right).
$$
Therefore $F$ attains its maximum at $t=r$ for $t \geq 0$, since
$$
F'(t)\geq0\quad\text{for }0\leq t\leq r,
\qquad
F'(t)\leq0\quad\text{for }t\geq r,
$$
and the claim follows since $F(r)=0.$
\end{proof}
By Lemma \ref{lem:jump-burkholder} and the compensation formula,
\begin{align}
2\mathbb E\mathcal I_2
&\leq
-\mathbb E\int_{(0,T]\times\mathbb R^n}
\Lambda_p(X_{s-},Y_s)|h|^2\,\nu^X(ds,dh) =
-\mathbb E\int_0^T
\Lambda_p(X_{s-},Y_s)\,d\langle X^d\rangle_s.
\label{eq:I2-defect}
\end{align}

Combining \eqref{eq:I1-defect} and \eqref{eq:I2-defect} yields
$$
\mathbb E(\mathcal I_1+\mathcal I_2)\leq0.
$$
Returning to It\^o's formula and using the obstacle inequality, we conclude that
$$
\E|Y_T|^p-(p-1)^p\E|X_T|^p
\leq
\E \B(X_T,Y_T)
\leq0.
$$
This proves the result for $p \geq 2$ and \cref{duality} gives the range $1<p \leq 2$.


\section{Proof of Theorem \ref{main theorem2}}\label{sec:proof-theorem-B}
We first prove the lower-range estimate, fixing $1<p\leq2$; the upper range follows by Proposition~\ref{duality}. Strictness and sharpness are proved at the end of the section.
\subsection{The Bellman function} Let $D_p$ be the parabolic cylinder function. Define $$\phi_p(s):= e^{s^2/4}D_p(s)$$ and let $z_p$ be the largest positive root of $D_p$. We recall the following facts. 
\begin{lemma}[{\cite[Lemma 3.1]{OsekowskiEJP2011}}]\label{parabcyl}
For $1<p\leq2$, the function $\phi_p$ satisfies
\[
 \phi_p''(s)-s\phi_p'(s)+p\phi_p(s)=0,\qquad
 \phi_p'(s)=p\phi_{p-1}(s).
\]
Consequently,
\[
 \begin{gathered}
 \phi_p''(s)=p(p-1)\phi_{p-2}(s),\qquad
 \phi_p'''(s)=p(p-1)(p-2)\phi_{p-3}(s),\\
 \phi_p(s)=s\phi_{p-1}(s)-(p-1)\phi_{p-2}(s).
 \end{gathered}
\]
For $s\geq z_p$, we have $$\phi_p(s)\geq0, \quad \phi_p'(s)>0, \quad 
\phi_p''(s)>0, \quad \phi_p'''(s)\leq0.$$ The last inequality
is strict when $p<2$.
\end{lemma}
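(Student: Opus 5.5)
The plan is to derive everything from two classical facts about Weber's parabolic cylinder functions. The first is the differential equation $D_\nu''(s)+\bigl(\nu+\tfrac12-\tfrac{s^2}{4}\bigr)D_\nu(s)=0$. The second is the recurrence $D_\nu'(s)+\tfrac s2 D_\nu(s)=\nu D_{\nu-1}(s)$. On top of these I would use the integral representation for negative index,
\[
D_{-a}(s)=\frac{e^{-s^2/4}}{\Gamma(a)}\int_0^\infty t^{a-1}e^{-st-t^2/2}\,dt,\qquad a>0,
\]
and the asymptotics $D_\nu(s)\sim s^\nu e^{-s^2/4}$ as $s\to\infty$.

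The identities are direct computations. Writing $\phi_p=e^{s^2/4}D_p$, we have $\phi_p'=e^{s^2/4}\bigl(D_p'+\tfrac s2D_p\bigr)$, and the recurrence turns this into $p\phi_{p-1}$. Differentiating once more gives
\[
\phi_p''=e^{s^2/4}\Bigl(D_p''+sD_p'+\bigl(\tfrac12+\tfrac{s^2}{4}\bigr)D_p\Bigr).
\]
Hence $\phi_p''-s\phi_p'+p\phi_p=e^{s^2/4}\bigl(D_p''+(p+\tfrac12-\tfrac{s^2}4)D_p\bigr)=0$. Iterating $\phi_\nu'=\nu\phi_{\nu-1}$, which holds for every real $\nu$, gives $\phi_p''=p(p-1)\phi_{p-2}$ and $\phi_p'''=p(p-1)(p-2)\phi_{p-3}$. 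Substituting the first two derivative formulas into the ODE and dividing by $p$ gives the three-term relation $\phi_p=s\phi_{p-1}-(p-1)\phi_{p-2}$.

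For the sign statements, the integral representation shows that $\phi_{-a}(s)=\Gamma(a)^{-1}\int_0^\infty t^{a-1}e^{-st-t^2/2}\,dt>0$ for every $s\in\R$ and every $a>0$. Since $p-2\in(-1,0]$ and $p-3\in(-2,-1]$, we get $\phi_{p-2}>0$ and $\phi_{p-3}>0$ on all of $\R$; when $p=2$ we use $\phi_0\equiv1$ instead. Therefore $\phi_p''=p(p-1)\phi_{p-2}>0$ everywhere, so $\phi_p$ is strictly convex. Likewise $\phi_p'''=p(p-1)(p-2)\phi_{p-3}\le0$, with strict inequality exactly when $p<2$.

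For $s\geq z_p$, the asymptotics give $\phi_p(s)\sim s^p>0$ at infinity. Since $z_p$ is the largest zero, $\phi_p>0$ on $(z_p,\infty)$ and $\phi_p(z_p)=0$, so $\phi_p\ge0$ there. It follows that $\phi_p'(z_p)\ge0$. Equality is impossible: $\phi_p$ solves a second-order linear ODE, so $\phi_p(z_p)=\phi_p'(z_p)=0$ would force $\phi_p\equiv0$. Hence $\phi_p'(z_p)>0$, and because $\phi_p'$ is strictly increasing (as $\phi_p''>0$), $\phi_p'>0$ on $[z_p,\infty)$.

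The only step that needs care is the positivity of $\phi_{p-2}$ and $\phi_{p-3}$ on the whole line; the integral representation for negative index handles it cleanly. I am taking as given, as the statement does, that $D_p$ has a largest positive zero when $1<p\le2$. If needed, this follows from the zero-counting theory of $D_\nu$, which gives two real zeros for $\nu\in(1,2]$, together with $D_p(0)=\sqrt\pi\,2^{p/2}/\Gamma\bigl(\tfrac{1-p}2\bigr)<0$ for $p\in(1,2]$. At $p=2$ the right side is read as $\lim_{q\to2}1/\Gamma\bigl(\tfrac{1-q}2\bigr)=1/\Gamma\bigl(-\tfrac12\bigr)<0$, which is consistent with $D_2(s)=(s^2-1)e^{-s^2/4}$ giving $D_2(0)=-1$.
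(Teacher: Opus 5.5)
Your proof is correct and follows essentially the route behind the cited result (the paper gives no proof of its own and simply cites Os\k{e}kowski). The identities come from Weber's equation and the recurrence $D_\nu'+\frac s2D_\nu=\nu D_{\nu-1}$. The signs of $\phi_p''$ and $\phi_p'''$ come from the integral representation of $\phi_{p-2}$ and $\phi_{p-3}$, which is the same representation the paper quotes from the proof of Os\k{e}kowski's Lemma 3.1. Your ODE-uniqueness argument then gives $\phi_p'(z_p)>0$.

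Two minor points. Existence of a largest positive zero already follows from $D_p(0)<0$ and $D_p>0$ near $+\infty$, so no zero-counting theory is needed. At $p=2$ no limit is required, since $\Gamma(-\tfrac12)=-2\sqrt\pi$ is finite.
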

The scalar Bellman function introduced in \cite{OsekowskiEJP2011} takes the form 
\begin{equation}\label{ose bellman}
   V_p(x,y)= \begin{cases}
        \alpha_py^p \phi_p(x/y), \quad 0<y<x/z_p, \\ y^p-(x/z_p)^p, \quad y \geq x/z_p, 
    \end{cases} \qquad \alpha_p:=-\frac{1}{z_p \phi_{p-1}(z_p)}<0.
\end{equation}
We set $V_p(x,0)=\alpha_px^p$ for $x>0$ and $V_p(0,0)=0$ by continuity. \\

In the rest of the proof we will call the \emph{parabolic-cylinder region} the set $\{(x,y), \ 0<y<x/z_p\}$ and the \emph{obstacle region} the set $\{(x,y), \ y > x/z_p\}.$ Our Bellman function is then 
\[ \B(x,y)= V_p(|x|,|y|), \qquad (x,y) \in \R^{2n}.\] The function $\B$ is $C^1$ everywhere and, as observed in \cite{OsekowskiEJP2011}, is of class $C^2$ away from $$ S:= \{|x| =z_p |y| \} \cup \{|x|=0\} \cup \{ |y|=0 \} \subset \R^{2n}.$$  

Before studying the drift terms, we record the following formulas.
\begin{lemma}\label{radial derivatives}
For $(x,y) \in \R^{2n} \setminus S$, set $r=|x|$, $s=|y|$. In the parabolic-cylinder region,
writing $q=r/s$, we have
$$
\partial_rV_p(r,s)=\alpha_p s^{p-1}\phi_p'(q),
\qquad
\partial_{rr}V_p(r,s)=\alpha_p s^{p-2}\phi_p''(q).
$$
Moreover, the relations for $\phi_p$ in Lemma~\ref{parabcyl} give
$$
\begin{aligned}
\partial_sV_p(r,s)
&=-\alpha_p s^{p-1}\phi_p''(q)
=-s\partial_{rr}V_p(r,s).
\end{aligned}
$$  
In the obstacle region,
$$
\partial_rV_p(r,s)=-pz_p^{-p}r^{p-1},
\qquad
\partial_{rr}V_p(r,s)=-p(p-1)z_p^{-p}r^{p-2},
\qquad
\partial_sV_p(r,s)=ps^{p-1}.
$$
In particular, $\partial_rV_p\leq0$ and $\partial_sV_p>0$
in both regions.
\end{lemma}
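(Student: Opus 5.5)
The formulas follow by direct differentiation of the two branches of \eqref{ose bellman}, using the identities of Lemma~\ref{parabcyl}. We treat the parabolic-cylinder region first and then the obstacle region.

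\textbf{Parabolic-cylinder region.} Here $V_p(r,s)=\alpha_p s^p\phi_p(r/s)$ with $q=r/s$. Since $\partial_r q=1/s$, the chain rule gives
\[
\partial_rV_p=\alpha_p s^{p-1}\phi_p'(q),\qquad \partial_{rr}V_p=\alpha_p s^{p-2}\phi_p''(q).
\]
For the $s$-derivative we use $\partial_s q=-r/s^2=-q/s$, which yields
\[
\partial_sV_p=\alpha_p\bigl(ps^{p-1}\phi_p(q)-s^{p-1}q\phi_p'(q)\bigr)=\alpha_p s^{p-1}\bigl(p\phi_p(q)-q\phi_p'(q)\bigr).
\]
The ODE $\phi_p''-s\phi_p'+p\phi_p=0$ of Lemma~\ref{parabcyl}, evaluated at $q$, says exactly that $p\phi_p(q)-q\phi_p'(q)=-\phi_p''(q)$. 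Hence
\[
\partial_sV_p=-\alpha_p s^{p-1}\phi_p''(q)=-s\,\partial_{rr}V_p.
\]

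\textbf{Obstacle region.} Here $V_p=s^p-z_p^{-p}r^p$, and the three stated formulas are immediate. Since $r,s>0$ off $S$, we get $\partial_rV_p=-pz_p^{-p}r^{p-1}\leq 0$ and $\partial_sV_p=ps^{p-1}>0$.

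\textbf{Signs in the parabolic-cylinder region.} In this region $q=r/s>z_p$. By Lemma~\ref{parabcyl}, $\phi_p'(q)>0$ and $\phi_p''(q)>0$ for $q\geq z_p$. Since $\alpha_p<0$, this gives
\[
\partial_rV_p=\alpha_ps^{p-1}\phi_p'(q)<0,\qquad \partial_sV_p=-\alpha_ps^{p-1}\phi_p''(q)>0.
\]

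One point needs checking: that $\alpha_p<0$ is consistent with the sign of $\phi_{p-1}(z_p)$. From $\phi_p'=p\phi_{p-1}$ and $\phi_p'(z_p)>0$ we get $\phi_{p-1}(z_p)>0$. Since $z_p>0$, it follows that $\alpha_p=-1/(z_p\phi_{p-1}(z_p))<0$, as asserted in \eqref{ose bellman}. I expect no real obstacle beyond this; the only substantive input is that the $s$-derivative collapses to $-s\,\partial_{rr}V_p$ through the parabolic cylinder ODE.
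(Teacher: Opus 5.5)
Your proof is correct and follows the paper's (implicit) argument: direct differentiation of the two branches of \eqref{ose bellman}, with the parabolic-cylinder ODE collapsing $p\phi_p(q)-q\phi_p'(q)$ to $-\phi_p''(q)$, and the signs read off from Lemma~\ref{parabcyl} together with $\alpha_p<0$. Your extra check that $\phi_{p-1}(z_p)=\phi_p'(z_p)/p>0$, which confirms $\alpha_p<0$, is a harmless addition; the paper simply asserts this sign in the definition of $\alpha_p$.
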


\subsection{The drift terms}
Let $Y$ be purely discontinuous and $X$ c\`adl\`ag, with $d\langle Y\rangle\leq d\langle X^c\rangle$. Then
\[
 d\mathbf C_\M(s)=
 \begin{pmatrix}d\mathbf C_{X^c}(s)&0\\0&0\end{pmatrix}.
\]
We now deal with the continuous drift $\mathcal I_1$.
\begin{proposition} \label{continuous drift pt.2}
For $(x,y) \in \R^{2n} \setminus S$, set $r=|x|$, $s=|y|$. Then for every $h \in \R^n$ we have    
\[
D^2_{xx}\B(x,y)[h,h] \leq-\Lambda_p(x,y)|h|^2,
\]
where $$
\Lambda_p(x,y):=\frac{\partial_sV_p(r,s)}s>0.
$$
\end{proposition}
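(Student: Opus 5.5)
Since $\B(x,y)=V_p(|x|,|y|)$ is radial in $x$, I would start from the standard Hessian formula for radial functions: for $x\neq0$, writing $\hat x=x/r$,
\[
D^2_{xx}\B(x,y)[h,h]=\partial_{rr}V_p(r,s)\,(\hat x,h)^2+\frac{\partial_rV_p(r,s)}{r}\bigl(|h|^2-(\hat x,h)^2\bigr).
\]
The claim therefore reduces to two scalar inequalities on each region:
\[
\partial_{rr}V_p\leq-\Lambda_p
\qquad\text{and}\qquad
\frac{\partial_rV_p}{r}\leq-\Lambda_p .
\]
Positivity of $\Lambda_p=\partial_sV_p/s$ is already recorded in Lemma~\ref{radial derivatives}.

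\emph{Parabolic-cylinder region.} Lemma~\ref{radial derivatives} gives $\partial_sV_p=-s\,\partial_{rr}V_p$. Hence $\partial_{rr}V_p=-\Lambda_p$ holds with equality, and only the tangential inequality $\partial_rV_p/r\leq\partial_{rr}V_p$ remains. Writing $q=r/s$ and using the formulas of Lemma~\ref{radial derivatives}, this reads
\[
\alpha_p s^{p-2}\phi_p'(q)/q\leq\alpha_p s^{p-2}\phi_p''(q).
\]
Since $\alpha_p<0$, this is equivalent to $g(q):=\phi_p'(q)-q\phi_p''(q)\geq0$ for $q>z_p$. I would prove it by monotonicity. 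First, $g'(q)=-q\phi_p'''(q)\geq0$ for $q\geq z_p$ by Lemma~\ref{parabcyl}. Second, at the endpoint, $\phi_p(z_p)=0$, so the ODE gives $\phi_p''(z_p)=z_p\phi_p'(z_p)$. Hence
\[
g(z_p)=\phi_p'(z_p)\bigl(1-z_p^2\bigr),
\]
which is nonnegative as soon as $z_p\leq1$, because $\phi_p'(z_p)>0$.

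\emph{Obstacle region.} Here $r<z_ps$ and $p-2\leq0$, so $r^{p-2}\geq z_p^{p-2}s^{p-2}$, while $\Lambda_p=ps^{p-2}$. Plugging in the explicit derivatives gives
\[
\frac{\partial_rV_p}{r}=-pz_p^{-p}r^{p-2}\leq-pz_p^{-2}s^{p-2},
\qquad
\partial_{rr}V_p\leq-p(p-1)z_p^{-2}s^{p-2}.
\]
Both are at most $-ps^{p-2}$ provided $z_p^2\leq p-1$, which also implies $z_p\leq1$.

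\emph{Main obstacle.} Everything thus rests on the bound $z_p^2\leq p-1$ for $1<p\leq2$; at $p=2$ it is an equality, since $D_2(s)=e^{-s^2/4}(s^2-1)$ and so $z_2=1$. To prove it, I would try to show that $\phi_p$ has no zero beyond $\sqrt{p-1}$. Suppose $q_0>\sqrt{p-1}$ were the largest zero. Then $\phi_p'(q_0)>0$, and evaluating the recursion $\phi_p=q\phi_{p-1}-(p-1)\phi_{p-2}$ together with the ODE at $q_0$ should produce a sign contradiction. Failing that, I would take this estimate from Os\k{e}kowski's analysis in \cite{OsekowskiEJP2011}, where the same concavity property of $V_p$ in $x$ is established.

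Finally, the points with $x=0$ lie in $S$ and are excluded, so the radial formula above covers all of $\R^{2n}\setminus S$.
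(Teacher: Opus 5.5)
Your argument is correct and shares the paper's skeleton: the same radial Hessian decomposition, the same reduction to scalar inequalities in each region, and essentially the same obstacle-region computation. It also relies on the same bound $z_p^2\le p-1$, which the paper quotes from \cite[Corollary 3.3]{OsekowskiEJP2011}. The real difference is how you prove the tangential inequality $\phi_p'(q)\geq q\phi_p''(q)$ in the parabolic-cylinder region. The paper differentiates the ODE to get the pointwise identity $q\phi_p''=(p-1)\phi_p'+\phi_p'''$. The inequality then follows at once from $p-1\le1$, $\phi_p'>0$ and $\phi_p'''\le0$, without any information about $z_p$. Your monotonicity argument is also valid. However, it needs the endpoint bound $z_p\le1$, so this region ends up depending on the same external estimate as the obstacle region.

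As for your \emph{main obstacle}: the recursion and the ODE cannot by themselves give a contradiction at a zero $q_0$, because there both reduce to the same relation $q_0\phi_{p-1}(q_0)=(p-1)\phi_{p-2}(q_0)$. Both sides of that relation are positive, so no sign clash arises. What works is the differentiated ODE evaluated at $z_p$. Combined with $\phi_p''(z_p)=z_p\phi_p'(z_p)$, it gives $\phi_p'''(z_p)=\bigl(z_p^2-(p-1)\bigr)\phi_p'(z_p)$. Lemma~\ref{parabcyl} gives $\phi_p'''(z_p)\le0<\phi_p'(z_p)$, which yields $z_p^2\le p-1$ directly. With this, your proof no longer needs the citation.
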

As a consequence of this, since $d\mathbf C_{X^c}$ is positive semidefinite and
$\operatorname{Tr}(d\mathbf C_{X^c})=d\langle X^c\rangle$ we have
\begin{align*}
\mathcal I_1
=\frac12\int_0^T\operatorname{Tr}
\bigl(D^2_{xx}\B(X_{s-},Y_{s-})\,d\mathbf C_{X^c}(s)\bigr)
\leq-\frac12\int_0^T
\Lambda_p(X_{s-},Y_{s-})\,d\langle X^c\rangle_s.
\end{align*}
We now prove Proposition~\ref{continuous drift pt.2}.
\begin{proof}
For $p=2$, the claim is immediate, so assume
$1<p<2$. Since $(x,y)\notin S$, both $r$ and $s$ are positive.
Radial differentiation gives
$$
D^2_{xx}\B(x,y)[h,h]
=
\partial_{rr}V_p(r,s)\frac{(x,h)^2}{r^2}
+
\frac{\partial_rV_p(r,s)}r
\left(|h|^2-\frac{(x,h)^2}{r^2}\right).
$$
The result follows provided we show
$$
\frac{\partial_rV_p(r,s)}r
\leq \partial_{rr}V_p(r,s)
\leq -\Lambda_p(x,y)<0.
$$

In the parabolic-cylinder region, put $q=r/s$. Using the identities in Lemma~\ref{parabcyl} we get
$$
\frac{\partial_rV_p(r,s)}r
=\alpha_p s^{p-2}\frac{\phi_p'(q)}q,
\qquad
\partial_{rr}V_p(r,s)
=\alpha_p s^{p-2}\phi_p''(q)
=-\Lambda_p(x,y).
$$
Thus only the first comparison remains to be checked.
Differentiating the equation for $\phi_p$ gives
$$
q\phi_p''(q)
=(p-1)\phi_p'(q)+\phi_p'''(q)
\leq \phi_p'(q),
$$
where we used $p-1<1$, $\phi_p'>0$ and $\phi_p'''\leq0$
from Lemma~\ref{parabcyl}. Multiplying by
$\alpha_p s^{p-2}/q<0$ proves the required comparison.
Moreover, $\phi_p''>0$ implies
$\partial_{rr}V_p<0$. \\

In the obstacle region, direct differentiation gives
$$
\frac{\partial_rV_p(r,s)}r=-pz_p^{-p}r^{p-2},
\qquad
\partial_{rr}V_p(r,s)=-p(p-1)z_p^{-p}r^{p-2},
\qquad
\Lambda_p(x,y)=ps^{p-2}.
$$
Since $p<2$, we immediately get $\partial_rV_p(r,s)/r < \partial_{rr}V_p(r,s)$. Using the fact that
$r<z_ps$ and
$$z_p^2\leq p-1$$ which follows from
\cite[Corollary 3.3]{OsekowskiEJP2011}, we have
$$
-\partial_{rr}V_p(r,s)
=p(p-1)z_p^{-p}r^{p-2}
\geq p(p-1)z_p^{-2}s^{p-2}
\geq ps^{p-2}
=\Lambda_p(x,y).
$$
This proves the scalar comparisons in both regions and concludes the proof.
\end{proof}
The following proposition controls the jump contribution $\mathcal I_2$.
\begin{proposition}\label{jump prop}
Let $(h,k) \in \R^{2n}$ and $(x,y) \in \R^{2n} \setminus S$. Then 
\[
\mathcal J_\B(x,y;h,k) \leq \frac12\Lambda_p(x,y)|k|^2.
\]
\end{proposition}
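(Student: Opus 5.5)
The plan is to reduce the vector inequality to a scalar inequality for $V_p$, and to exploit the fact that the $y$-gradient of the corrected function vanishes at the base point. Fix $(x,y)\notin S$, write $r=|x|$, $s=|y|$, $\Lambda=\Lambda_p(x,y)=\partial_sV_p(r,s)/s$, and set $x'=x+h$, $y'=y+k$, $r'=|x'|$, $s'=|y'|$. Since $D_y\B(x,y)=\partial_sV_p(r,s)\,y/s=\Lambda y$, we have $\frac12\Lambda|k|^2=\frac12\Lambda|y'|^2-\frac12\Lambda|y|^2-\Lambda(y,k)$. Hence the claim is equivalent to
\[
\B(x',y')-\tfrac12\Lambda|y'|^2\leq \B(x,y)-\tfrac12\Lambda|y|^2+\bigl(D_x\B(x,y),h\bigr).
\]
In words, the frozen-coefficient function $G(\cdot,\cdot)=\B-\frac12\Lambda|\cdot_y|^2$ lies below its tangent plane at $(x,y)$, and that tangent plane has no $y$-component.

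\textbf{Radial reduction.} We have $(D_x\B(x,y),h)=\partial_rV_p(r,s)\,(x,x'-x)/r$. By Lemma~\ref{radial derivatives}, $\partial_rV_p\leq0$, and Cauchy--Schwarz gives $(x,x')\leq rr'$. Together these yield $(D_x\B(x,y),h)\geq \partial_rV_p(r,s)(r'-r)$. It therefore suffices to prove the scalar inequality
\[
W(r',s'):=V_p(r',s')-\tfrac12\Lambda s'^2\leq W(r,s)+\partial_rW(r,s)(r'-r),\qquad r',s'\geq0.
\]
Here $\partial_sW(r,s)=0$ by the choice of $\Lambda$. The boundary values $r'=0$ or $s'=0$ are then handled by continuity.

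\textbf{Scalar inequality.} For fixed $r'$, I would first maximize $g(\sigma)=V_p(r',\sigma)-\frac12\Lambda\sigma^2$ over $\sigma\geq0$. Critical points satisfy $\partial_sV_p(r',\sigma)/\sigma=\Lambda$. In the parabolic-cylinder region, Lemma~\ref{radial derivatives} gives $\partial_sV_p(r',\sigma)/\sigma=-\alpha_p\sigma^{p-2}\phi_p''(r'/\sigma)$; in the obstacle region it equals $p\sigma^{p-2}$. Using $\phi_p'''\leq0$ and $p<2$ from Lemma~\ref{parabcyl}, I expect this quantity to be monotone decreasing in $\sigma$ for fixed $r'$. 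Then $g$ has a unique maximizer $\sigma^*(r')$, characterized by $\Lambda_p(r',\sigma^*)=\Lambda$.

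By $p$-homogeneity of $V_p$, the function $\Lambda_p$ is $(p-2)$-homogeneous. So the level curve $\{\Lambda_p=\Lambda\}$ is an explicit curve through $(r,s)$, and the problem collapses to a one-variable inequality along it. Set $m(r'):=\max_\sigma g(\sigma)=W(r',\sigma^*(r'))$. By the envelope theorem, $m'(r')=\partial_rV_p(r',\sigma^*(r'))$, and $m(r)=W(r,s)$, $m'(r)=\partial_rW(r,s)$. It then suffices to show that $m$ is concave, i.e.\ that $r'\mapsto\partial_rV_p(r',\sigma^*(r'))$ is nonincreasing. Its derivative is $\partial_{rr}V_p+\partial_{rs}V_p\,(\sigma^*)'$. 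I would compute $(\sigma^*)'$ by implicit differentiation of $\partial_sV_p=\Lambda\sigma$, using the identity $\partial_sV_p=-s\,\partial_{rr}V_p$ (a backward heat equation in the variables $(r,s^2/2)$) together with the ODE for $\phi_p$.

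\textbf{Main obstacle.} The hardest step is verifying the sign of this concavity along the curve $\Lambda_p=\Lambda$, in particular where the curve crosses from the obstacle region into the parabolic-cylinder region across $r'=z_p\sigma$. There only $C^1$ matching holds, so $m'$ has to be checked to be continuous and monotone piecewise. The inputs I would use are $\phi_p''>0$, $\phi_p'''\leq0$, the recursion $\phi_p=s\phi_{p-1}-(p-1)\phi_{p-2}$, and $z_p^2\leq p-1$, the same facts that drove Proposition~\ref{continuous drift pt.2}. If the monotonicity of $\sigma\mapsto\partial_sV_p/\sigma$ fails somewhere, the fallback is to prove $\partial_{ss}W\leq0$ and $\partial_{rr}W\leq0$ together with a Hessian determinant bound, i.e.\ concavity of $W$ in each region. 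In the parabolic-cylinder region this reduces via the heat-equation identity to $\phi_p''\phi_p'''$-type inequalities.
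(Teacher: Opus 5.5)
Your reduction is correct and matches the paper's. Write $F_p(r,t):=V_p(r,\sqrt t)$. Then $\partial_tF_p(r,s^2)=\Lambda/2$, so your scalar inequality $W(r',s')\le W(r,s)+\partial_rW(r,s)(r'-r)$, required at every base point $(r,s)$, is exactly the tangent-plane characterization of concavity of $F_p$ on the quadrant. Your radial step, using $\partial_rV_p\le0$ and $(x,h)\le r(r'-r)$, is identical to the paper's.

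\textbf{The gap.} The decisive inequality is never proved. Along your envelope route, implicit differentiation of $\partial_tF_p(r',t^*)=\Lambda/2$ gives $m''=\partial_{rr}F_p-(\partial_{rt}F_p)^2/\partial_{tt}F_p=\det D^2F_p/\partial_{tt}F_p$, so you need $\det D^2F_p\ge0$. In the parabolic-cylinder region, the heat equation $\partial_tF_p=-\frac12\partial_{rr}F_p$ turns this into log-convexity $GG''\ge(G')^2$ of $G=\phi_{p-2}$. None of the facts you list ($\phi_p''>0$, $\phi_p'''\le0$, the recursion, $z_p^2\le p-1$) gives this. The missing idea is the integral representation $G(q)=\Gamma(2-p)^{-1}\int_0^\infty v^{1-p}e^{-qv-v^2/2}\,dv$. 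Under it, the matrix
\[
\begin{pmatrix} G & -G'/(2\sqrt t)\\ -G'/(2\sqrt t) & G''/(4t)\end{pmatrix}
\]
is the moment matrix of a positive measure, hence positive semidefinite. With $\alpha_p<0$ this gives $D^2F_p\le0$, which is exactly how the paper argues. Once this is known, the free boundary is not the real obstacle: $V_p$ is $C^1$, so a piecewise argument along segments (or continuity of $m'$) finishes.

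\textbf{Further points to fix.}
\begin{enumerate}
\item Monotonicity of $\sigma\mapsto\Lambda_p(r',\sigma)$ does not follow from $\phi_p'''\le0$ and $p<2$. Its derivative has the sign of $(p-2)\phi_p''(q)-q\phi_p'''(q)$, and these two terms have opposite signs. The monotonicity is true, but only because $(p-2)\phi_p''-q\phi_p'''=p(p-1)(p-2)(3-p)\phi_{p-4}<0$, which again uses positivity of $\phi_\nu$ for $\nu<0$.
\item As $\sigma\to0$, $\Lambda_p(r',\sigma)\to-\alpha_pp(p-1)(r')^{p-2}$. So for large $r'$ there is no interior critical point, and the maximizer is $\sigma^*=0$ with $m(r')=\alpha_p(r')^p$. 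Your description omits this branch.
\item Your fallback cannot work. With $\Lambda$ frozen, $W$ is not concave: in the obstacle region $\partial_{ss}W=p(p-1)s^{p-2}-\Lambda\to+\infty$ as $s\to0$. The function to make concave is $V_p(r,\sqrt t)$, not $V_p(r,s)-\frac12\Lambda s^2$.
\end{enumerate}
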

Assuming Proposition~\ref{jump prop}, compensation of the jumps gives
\[
 \E\mathcal I_2\leq\frac12\E\int_0^T
       \Lambda_p(X_{s-},Y_{s-})\,d\langle Y\rangle_s.
\]
Combining the two estimates with $\Lambda_p\geq0$ and $d\langle Y\rangle\leq d\langle X^c\rangle$, we obtain
\[
 \E(\mathcal I_1+\mathcal I_2)
 \leq\frac12\E\int_0^T\Lambda_p(X_{s-},Y_{s-})
       \bigl(d\langle Y\rangle_s-d\langle X^c\rangle_s\bigr)
 \leq0.
\]
Finally, the conclusion follows from the obstacle condition
\cite[Section 4.1]{OsekowskiEJP2011}
$$
\B(x,y)\geq \mathcal{O}(x,y):=|y|^p-z_p^{-p}|x|^p.
$$ Indeed, after localization and passage to the limit, It\^o's formula gives
$$
\mathbb E\B(X_T,Y_T)\leq\mathbb E\B(X_0,Y_0)\leq0.
$$
We have then obtained 
$$
\mathbb E|Y_T|^p\leq z_p^{-p}\mathbb E|X_T|^p.
$$
Taking $p$-th roots and the supremum over $T$ yields the result. We next prove Proposition~\ref{jump prop}.

\subsection{Proof of Proposition~\ref{jump prop}}
The quadratic correction in the second variable suggests using its
square as a coordinate. Fix $(x,y)\notin S$ and set
\[
r=|x|,\qquad s=|y|,\qquad
\rho=|x+h|,\qquad \sigma=|y+k|.
\] Define
$$
F_p(r,t):=V_p(r,\sqrt t),\qquad r,t\geq0.
$$
For $r,s>0$, the chain rule gives
$$
\partial_rF_p(r,s^2)=\partial_rV_p(r,s),\qquad
\partial_tF_p(r,s^2)=\frac{\partial_sV_p(r,s)}{2s}=\Lambda_p(x,y)/2.
$$
Under this change of variables, the jump estimate becomes
\begin{align*}
\mathcal J_\B(x,y;h,k)-\frac12\Lambda_p(x,y)|k|^2
&=F_p(\rho,\sigma^2)-F_p(r,s^2)
  -\partial_rF_p(r,s^2)\frac{(x,h)}r -\partial_tF_p(r,s^2)(\sigma^2-s^2).
\end{align*}
Since
\[
\rho-r\geq\frac{(x,h)}r,
\qquad
\partial_rF_p(r,s^2)=\partial_rV_p(r,s)\leq0,
\]
we can conclude 
\[\mathcal J_\B(x,y;h,k)-\frac12\Lambda_p(x,y)|k|^2\leq F_p(\rho,\sigma^2)-F_p(r,s^2)
  -\partial_rF_p(r,s^2)(\rho-r)-\partial_tF_p(r,s^2)(\sigma^2-s^2).\]
Thus it suffices to prove that $F_p$ is concave.
\begin{lemma}\label{lem:squared-variable-concavity}
For $1<p\leq2$, $F_p$ is concave on $(0,\infty)^2$ and nonincreasing
in $r$.
\end{lemma}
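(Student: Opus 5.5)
The plan is to verify monotonicity and concavity directly from the explicit form of $F_p$ in the two regions, and then glue across the free boundary. Monotonicity in $r$ is immediate: $\partial_rF_p(r,t)=\partial_rV_p(r,\sqrt t)\leq 0$ in both regions by Lemma~\ref{radial derivatives}, and $V_p$ is $C^1$ across $\{r=z_p s\}$.

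For concavity, $F_p$ is $C^1$ on $(0,\infty)^2$ and $C^2$ off the curve $\{r=z_p\sqrt t\}$. It therefore suffices to show that its Hessian is negative semidefinite in each open region. Indeed, along any segment the directional derivative is continuous and nonincreasing on each of the finitely many subintervals cut out by the curve, hence nonincreasing overall. We first dispose of $p=2$: it can be checked separately (or obtained as a limit of $p\uparrow2$), so from now on we take $1<p<2$.

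\emph{Obstacle region.} Here $F_p(r,t)=t^{p/2}-z_p^{-p}r^p$. This is a sum of a concave function of $t$ (since $p/2\leq1$) and a concave function of $r$ (since $p>1$), so it is concave.

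\emph{Parabolic-cylinder region.} Here $F_p(r,t)=\alpha_p t^{p/2}\phi_p(q)$ with $q=r/\sqrt t$. Using Lemma~\ref{parabcyl} and writing $\phi=\phi_p$, we compute
\[
\partial_{rr}F_p=\alpha_p t^{\frac{p-2}{2}}\phi''(q),\qquad
\partial_{rt}F_p=-\tfrac{\alpha_p}{2}t^{\frac{p-3}{2}}\phi'''(q),\qquad
\partial_{tt}F_p=-\tfrac{\alpha_p}{4}t^{\frac{p-4}{2}}\bigl((p-2)\phi''(q)-q\phi'''(q)\bigr).
\]
The last two come from $\partial_tF_p=\Lambda_p/2=-\tfrac{\alpha_p}{2}t^{(p-2)/2}\phi''(q)$. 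Since $\alpha_p<0$ and $\phi''>0$, we have $\partial_{rr}F_p<0$. It then suffices to show the determinant is nonnegative. That determinant equals $-\tfrac{\alpha_p^2}{4}t^{p-3}\bigl[\phi''((p-2)\phi''-q\phi''')+(\phi''')^2\bigr]$.

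Substituting $\phi''=p(p-1)\phi_{p-2}$ and $\phi'''=p(p-1)(p-2)\phi_{p-3}$, the bracket becomes $p^2(p-1)^2(p-2)\,T(q)$, where
\[
T:=\phi_{p-2}^2-q\phi_{p-2}\phi_{p-3}+(p-2)\phi_{p-3}^2 .
\]
Applying the three-term recurrence of Lemma~\ref{parabcyl} at order $p-2$, i.e.\ $\phi_{p-2}=q\phi_{p-3}-(p-3)\phi_{p-4}$, gives
\[
T=(p-2)\phi_{p-3}^2-(p-3)\phi_{p-2}\phi_{p-4}.
\]
Since $p-2<0$, we need $T\geq0$. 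With $b:=2-p\in(0,1)$ this reads
\[
b\,\phi_{-(b+1)}^2\leq (b+1)\,\phi_{-b}\,\phi_{-(b+2)}.
\]

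This Turán-type inequality is the main obstacle. My first attempt would use the integral representation for negative orders,
\[
D_{-a}(z)=\frac{e^{-z^2/4}}{\Gamma(a)}\int_0^\infty u^{a-1}e^{-zu-u^2/2}\,du,\qquad a>0,
\]
so that $\phi_{-a}(q)=m_a/\Gamma(a)$ with $m_a:=\int_0^\infty u^{a-1}w(u)\,du$ and $w(u)=e^{-qu-u^2/2}>0$. Cauchy--Schwarz gives $m_{b+1}^2\leq m_b\,m_{b+2}$. Since $\Gamma(b)\Gamma(b+2)/\Gamma(b+1)^2=(b+1)/b$, this translates into $\phi_{-(b+1)}^2\leq\frac{b+1}{b}\phi_{-b}\phi_{-(b+2)}$, which is exactly the required inequality. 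All orders involved are negative, so the representation applies for every real $q$, and in particular on $q>z_p$.

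This gives negative semidefiniteness in the parabolic region. Combined with the obstacle region and the $C^1$ gluing, it yields concavity of $F_p$ on $(0,\infty)^2$. The points to double-check are the second-derivative computations (especially the $t$-derivatives through $q=r/\sqrt t$) and the normalization between $D_{-a}$ and $\phi_{-a}$ in the integral formula.
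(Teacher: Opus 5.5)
Your proof is correct and follows essentially the same route as the paper: the obstacle region is handled by inspection, negative semidefiniteness in the parabolic-cylinder region comes from the integral representation of $\phi_{p-2}$, and the two regions are glued using $C^1$ regularity along segments that meet the free boundary at most twice. The only difference is cosmetic: the paper uses $\partial_tF_p=-\tfrac12\partial_{rr}F_p$ to write $D^2F_p$ in terms of $G=\phi_{p-2}$, $G'$, $G''$ and shows directly that the quadratic form is an integral of a square. You instead go through the determinant and the three-term recurrence at order $p-2$ (valid for all orders, although Lemma~\ref{parabcyl} only states it for $1<p\leq2$) and arrive at the same log-convexity inequality $m_{b+1}^2\leq m_bm_{b+2}$, which you could also read off immediately from $G'=-m_{b+1}/\Gamma(b)$ and $G''=m_{b+2}/\Gamma(b)$.
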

\begin{proof}
For $p=2$ the claim is immediate, so let $1<p<2$. In the obstacle region,
$$
F_p(r,t)=t^{p/2}-z_p^{-p}r^p,
$$
which has a negative semidefinite Hessian since $p/2<1<p$.
In the parabolic-cylinder region, put $q=r/\sqrt t$ and
$G(q)=\phi_{p-2}(q)$. The identities in
Lemmas~\ref{parabcyl} and~\ref{radial derivatives} give
$$
\partial_{rr}F_p(r,t)
=\alpha_pp(p-1)t^{(p-2)/2}G(q),
\qquad
\partial_tF_p=-\frac12\partial_{rr}F_p.
$$
Differentiating the second identity, we get
$\partial_{rt}F_p=-\frac12\partial_{rrr}F_p$ and
$\partial_{tt}F_p=\frac14\partial_{rrrr}F_p$, hence
$$
D^2F_p(r,t)
=\alpha_pp(p-1)t^{(p-2)/2}
\begin{pmatrix}
G(q)&-\dfrac{G'(q)}{2\sqrt t}\\[3pt]
-\dfrac{G'(q)}{2\sqrt t}&\dfrac{G''(q)}{4t}
\end{pmatrix}.
$$
The integral representation
\cite[proof of Lemma 3.1]{OsekowskiEJP2011}
$$
G(q)=\frac1{\Gamma(2-p)}
\int_0^\infty v^{1-p}e^{-qv-v^2/2}\,dv
$$
shows that the matrix in this expression is positive semidefinite:  its quadratic form at $(a,b)\in\R^2$ equals
$$
\mathsf Q(a,b)=\frac1{\Gamma(2-p)}
\int_0^\infty
\left(a+\frac{bv}{2\sqrt t}\right)^2
v^{1-p}e^{-qv-v^2/2}\,dv\geq0.
$$
Since $\alpha_p<0$, we have $D^2F_p\leq0$ in both smooth regions. \\ We now pass to concavity in the whole open quadrant by an easy continuity argument. Fix
$(r_0,t_0),(r_1,t_1)$ in the open quadrant and let $(r_\theta, t_\theta)$ be a point on the line segment connecting them, for $\theta \in [0,1].$ The segment meets the singular set $S \cap (0,\infty)^2$ at most twice, that is when
$$r_\theta^2-z_p^2t_\theta=0,$$  In between, the
segment lies in one of the smooth regions, hence the $C^1$ function 
\[
G(\theta)=F_p(r_\theta,t_\theta),
\qquad 0\leq\theta\leq1,
\]
verifies
\[
G''(\theta)
=D^2F_p(r_\theta,t_\theta)[v,v]\leq0,
\qquad v=(r_1-r_0,t_1-t_0).
\]
By continuity, $G'$ is nonincreasing on the whole interval, so $F_p$ is concave in the open quadrant and, by continuity,
on $[0,\infty)^2$.
Lemma~\ref{radial derivatives} and the continuity across $S$ give
\[
\partial_rF_p(r,t)=\partial_rV_p(r,\sqrt t)\leq0,
\qquad r,t>0.
\]
\end{proof}

\begin{proof}[Proof of Proposition~\ref{jump prop}]
Fix $(x,y)\notin S$ and set
$$
r=|x|,\qquad s=|y|,\qquad
r_1=|x+h|,\qquad s_1=|y+k|.
$$
By Lemma~\ref{lem:squared-variable-concavity}, we get
$$
\B(x+h,y+k)-\B(x,y)
\leq\partial_rV_p(r,s)(r_1-r)
+\frac12\Lambda_p(x,y)(s_1^2-s^2).
$$
Since
$$
r_1-r\geq\frac{(x,h)}r,\qquad
s_1^2-s^2=2(y,k)+|k|^2,
\qquad \partial_rV_p(r,s)\leq0,
$$
we can conclude
\begin{align*}
\B(x+h,y+k)-\B(x,y)
\leq D_x\B(x,y)h+D_y\B(x,y)k
+\frac12\Lambda_p(x,y)|k|^2.
\end{align*}
\end{proof}

\subsection{Strictness}
Let $1<p<2$ and $C=z_p^{-1}>1$. Assume that
$0<\|X\|_p<\infty$ and suppose that
$\|Y\|_p=C\|X\|_p$. Following
\cite[Section 4.4]{OsekowskiEJP2011}, we first show that equality
forces the martingales to remain on the free boundary.
Set $$U_t:=\B(X_t,Y_t).$$ The growth bound
$|\B(x,y)|\leq C_p(|x|^p+|y|^p)$ and Doob's inequality, combined with the compensated drift estimates, show that $U$ is a uniformly integrable supermartingale. Writing
$\mathcal O(x,y):=|y|^p-C^p|x|^p,$ the obstacle inequality
$\mathcal O\leq\B$ and the initial domination imply
$$
0=\E\mathcal O(X_\infty,Y_\infty)
\leq\E U_\infty
\leq\E U_0
\leq0.
$$
Consequently, every inequality in this chain is an equality.
In particular, the random variable
$$
G:=\B(X_\infty,Y_\infty)-\mathcal O(X_\infty,Y_\infty)
$$
is zero almost surely.
For $1<p<2$, the obstacle inequality is strict whenever
$|y|<C|x|$; see \cite[Section 4.1]{OsekowskiEJP2011}.
Therefore, we necessarily have that
$
|Y_\infty|\geq C|X_\infty|
$ almost surely and that $\mathcal O(X_\infty,Y_\infty)$ is nonnegative. Since its expectation is zero,
$$
|Y_\infty|=C|X_\infty|.
$$
The Bellman function vanishes on the free boundary, so
$U_\infty=0$ almost surely. Using that $U_t$ is a supermartingale gives that $U_t=0$
almost surely for every fixed $t$, which implies that $U$ vanishes identically. Since
$$
\B(x,y)=0
\quad\Longleftrightarrow\quad
|y|=C|x|,
$$
we have obtained that
$$
|Y_t|^2=C^2|X_t|^2
\qquad\text{for all }t\geq0.
$$
The identity $|Y_0|=C|X_0|$ and the initial
domination $|Y_0|\leq|X_0|$ imply $X_0=Y_0=0$, since $C>1$.
For $t >0$, a standard localization argument and the uniqueness of the Doob-Meyer decomposition imply that
$$
\langle Y\rangle_{t}
=
C^2\langle X\rangle_{t}.
$$
Combining this identity
with predictable subordination and the decomposition of
$\langle X\rangle$, we find
$$
C^2d\langle X\rangle
=
d\langle Y\rangle
\leq d\langle X^c\rangle
\leq d\langle X\rangle.
$$
Since $C>1$ it follows that
$\langle X\rangle=0$, and hence also $\langle Y\rangle=0$.
Therefore, both martingales vanish identically, which is a contradiction. This proves strictness for $1<p<2$.
For $p>2$, put $q=p'$. If $\|Y\|_p=0$, strictness is immediate.
Otherwise, choose
$$
Z:=\frac{|Y_\infty|^{p-2}Y_\infty}{\|Y_\infty\|_p^{p-1}},
\qquad Z_t:=\E[Z\mid\F_t].
$$
Then $\|Z\|_q=1$ and $\E(Y_\infty,Z)=\|Y\|_p$. The reverse
construction in Proposition~\ref{duality} gives a purely discontinuous martingale
$W$ such that
$$
d\langle W\rangle\leq d\langle Z^c\rangle,\qquad
|W_0|\leq|Z_0|,\qquad
\E(Y_\infty,Z)=\E(X_\infty,W_\infty).
$$
Strictness in the lower range with exponent $q$ yields
$\|W\|_q<z_q^{-1}$. Finally, by H\"older's inequality
$$
\|Y\|_p\leq\|X\|_p\|W\|_q<z_q^{-1}\|X\|_p.
$$

\subsection{Sharpness}
Let $M$ be the parabolic Az\'ema martingale starting at zero
\cite{Emery1989Azema}, satisfying the structure equation
\[
 d[M]_t=dt-2M_{t-}\,dM_t.
\]
We use its properties
\[
 \langle M\rangle_t=t,\qquad M^c=0,\qquad |M_t|=\sqrt t.
\]
Take an independent standard Brownian motion $B$. For any nonzero bounded
stopping time $\tau$ of $B$, the martingales
$X=B^\tau$ and $Y=M^\tau$, in the joint filtration, satisfy
\[
 \langle Y\rangle_t=t\wedge\tau=\langle X^c\rangle_t,
 \qquad
 \frac{\|Y\|_p}{\|X\|_p}
 =\frac{\|\sqrt\tau\|_p}{\|B_\tau\|_p}.
\]
For $1<p\leq 2$, Davis's optimal stopping theorem
\cite{Davis1976} states that \[\sup_{\tau \in \mathcal T}\frac{\|\sqrt\tau\|_p}{\|B_\tau\|_p}=z_p^{-1}, \] where $\mathcal T$ is the set of nonzero bounded stopping times of $B$.
Thus the constant in \eqref{B.2} is sharp already for real-valued
martingales and Proposition~\ref{duality}
proves sharpness in \eqref{B.1}.


\section{Applications}
\label{sec:applications}

\subsection{The Riesz vector on the Hamming cube}
Let $\Ham_n=\{-1,1\}^n$ carry the uniform probability measure
$\mu_n$. For $x\in\Ham_n$, let $x^{(j)}$ be obtained by changing
the sign of its $j$-th coordinate. Define
\[
 D_jf(x):=\frac{f(x)-f(x^{(j)})}{2},\qquad
 \Delta=\sum_{j=1}^nD_j,
\]
since $D_j^2=D_j$. For
$S\subseteq[n]=\{1,\ldots,n\}$, the Walsh characters
$w_S(x)=\prod_{j\in S}x_j$ satisfy
\[
 D_jw_S=\mathbf1_{\{j\in S\}}w_S,\qquad
 \Delta w_S=|S|w_S.
\]
Let $P_y=e^{-y\sqrt\Delta}$ and, for $f:\Ham_n\to\R$, set
\[
 u(x,y)=P_yf(x)
 =\sum_{S\subseteq[n]}e^{-y\sqrt{|S|}}\widehat f(S)w_S(x),
 \qquad y\geq0,
\]
where $\widehat f(S)=\langle f,w_S\rangle_{L^2(\mu_n)}$.
Then $u(x,0)=f(x)$ and $u$ is harmonic, that is
\[
 (\partial_{yy}-\Delta)u=0.
\]
The Riesz transforms and Riesz vector are then
\[
 R_j=D_j\Delta^{-1/2},\qquad
 \mathbf Rf=(R_1f,\ldots,R_nf),
\]
where $\Delta^{-1/2}$ is defined to vanish on constants.

\subsubsection{The horizontal process}
To ease the reader, we explicitly construct a continuous-time Markov chain whose generator is
$-\Delta$. The resulting process $Z$ is the continuous-time random walk on $\Ham_n$ in which each coordinate flips at rate $1/2$; see
\cite[Section 2.1]{Eskenazis2023HeatFlow}. \\ 
Let $N^1,\ldots,N^n$ be independent Poisson processes
of rate $1/2$, all starting at zero, and let $Z_0$ have law
$\mu_n$, independently of these processes. Define
\[
 Z_t=(Z_t^1,\ldots,Z_t^n),\qquad
 Z_t^j=Z_0^j(-1)^{N_t^j}.
\]
When $N^j$ jumps, the $j$-th coordinate of $Z$ changes sign
and all the other coordinates remain unchanged; notice that distinct Poisson processes almost surely do not jump simultaneously. Thus $Z$ has c\`adl\`ag,
piecewise constant paths with finitely many jumps on every bounded
time interval. The independent increments of $N$ give
\[
 Z_{t+h}^j=Z_t^j(-1)^{N_{t+h}^j-N_t^j}.
\]
and the transition
kernel is
\[
 K_h(x,z):=\prob(Z_{t+h}=z\mid Z_t=x)
 =\prod_{j=1}^n\frac{1+e^{-h}x_jz_j}{2}.
\]
Therefore, for every $\varphi:\Ham_n\to\R$,
\begin{align*}
 \mathcal L_H\varphi(x)
 =\lim_{h\downarrow0}
   \frac{\E[\varphi(Z_{t+h})\mid Z_t=x]-\varphi(x)}h
 =\frac12\sum_{j=1}^n
   \bigl(\varphi(x^{(j)})-\varphi(x)\bigr)
 =-\Delta\varphi(x).
\end{align*}

One can easily prove that $\mu_n$ is invariant, and the choice $Z_0\sim\mu_n$ makes $Z$
stationary.

\subsubsection{The Poisson martingales}
Let $B$ be a standard Brownian motion, independent of Z. For $a>0$, define
\[
 Y_t^a=a+\sqrt2B_t,\qquad
 \tau=\inf\{t\geq0:Y_t^a=0\}.
\]
We suppress the superscript $a$ below. The normalization gives
$d\langle Y\rangle_t=2\,dt$, so the vertical generator is
$\partial_{yy}$. By independence, the generator of $(Z,Y)$
is $-\Delta+\partial_{yy}$ and since $u$ is harmonic
\[
 X_t=u(Z_{t\wedge\tau},Y_{t\wedge\tau})
\]
is a bounded martingale; its decomposition is computed explicitly
below.
Since $\tau<\infty$ almost surely, its terminal value is
$X_\tau=f(Z_\tau)$. Conditioning on $\tau$, which is independent
of the entire horizontal process, and using stationarity gives
$Z_\tau\sim\mu_n$. Thus, for $1\leq p<\infty$,
\[
 \|X\|_p=\|X_\tau\|_p=\|f\|_{L^p(\mu_n)}.
\]
Define the continuous $\R^n$-valued martingale $M$ associated to the Riesz vector by
\[
 M_t^j=\int_0^{t\wedge\tau}D_ju(Z_{s-},Y_s)\,dY_s,
 \qquad 1\leq j\leq n.
\]
The argument is standard, so we only sketch it. Let $g:\Ham_n\to\R$
set $v(x,y)=P_yg(x)$, and let
$G_t=v(Z_{t\wedge\tau},Y_{t\wedge\tau})$ be its Poisson
martingale. Since $M_0=0$ and $M$ is continuous
\begin{align*}
 \E\bigl[M_\tau^jg(Z_\tau)\bigr] =\E\langle M^j,G\rangle_\tau =2\int_0^\infty(a\wedge y)
       \langle D_jP_yf,\partial_yP_yg\rangle_{L^2(\mu_n)}\,dy.
\end{align*}
Here we used stationarity, independence, and the Green kernel
$a\wedge y$ of the vertical process killed at zero. The terminal pairing is justified by the bracket computation below,
which gives
\[
 \E|M_\tau|^2
 =\E\langle M\rangle_\tau
 =\E\langle X^d\rangle_\tau
 \leq\E|X_\tau|^2<\infty.
\] Recalling
\[
 -2\sqrt\lambda\int_0^\infty(a\wedge y)e^{-2y\sqrt\lambda}\,dy
 =-\frac{1-e^{-2a\sqrt\lambda}}{2\sqrt\lambda},
 \qquad \lambda>0.
\]
and expanding $f$ and $g$ in Walsh characters, with $\lambda=|S|$,
therefore gives
\[
 \E\bigl[M_\tau^jg(Z_\tau)\bigr]
 =-\frac12\langle R_j(I-P_{2a})f,g\rangle_{L^2(\mu_n)}.
\]
Since $Z_\tau\sim\mu_n$, we conclude that
\begin{equation}\label{eq:hamming-representation}
 \E[M_\tau\mid Z_\tau=x]
 =-\frac12\mathbf R(I-P_{2a})f(x).
\end{equation}
\begin{proposition}\label{prop:hamming-representation}
The martingales above satisfy
$d\langle M\rangle=d\langle X^d\rangle$. Consequently, for
$2\leq p<\infty$,
\[
 \|\mathbf Rf\|_{L^p(\Ham_n;\ell_n^2)}
 \leq\frac2{z_{p'}}\|f\|_{L^p(\Ham_n)}.
\]
\end{proposition}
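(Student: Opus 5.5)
We first compute the It\^o decomposition of $X_t=u(Z_{t\wedge\tau},Y_{t\wedge\tau})$. Since $Z$ is a pure jump process whose $j$-th coordinate flips at the jump times of $N^j$, and $Y$ is continuous, It\^o's formula together with harmonicity of $u$ gives
\[
 X_t=X_0+\int_0^{t\wedge\tau}\partial_yu(Z_{s-},Y_s)\,dY_s
 +\sum_{j=1}^n\int_0^{t\wedge\tau}\bigl(u(Z_{s-}^{(j)},Y_s)-u(Z_{s-},Y_s)\bigr)\,d\widetilde N_s^j,
\]
where $\widetilde N^j_t=N^j_t-t/2$ is the compensated Poisson process. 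The drift vanishes because the generator $-\Delta+\partial_{yy}$ annihilates $u$. The first integral is $X^c$ and the second is $X^d$. Since $u(x^{(j)},y)-u(x,y)=-2D_ju(x,y)$ and the $N^j$ have no common jumps, the jump of $X$ at a jump time of $N^j$ equals $-2D_ju(Z_{s-},Y_s)$. Compensating with rate $1/2$ gives
\[
 d\langle X^d\rangle_s=\sum_{j=1}^n\tfrac12\cdot4\,|D_ju(Z_{s-},Y_s)|^2\,ds
 =2\sum_{j=1}^n|D_ju(Z_{s-},Y_s)|^2\,ds
\]
on $\{s\leq\tau\}$. On the other hand, $d\langle Y\rangle=2\,ds$, so by definition of $M$,
\[
 d\langle M\rangle_s=\sum_j|D_ju(Z_{s-},Y_s)|^2\,2\,ds.
\]
Hence $d\langle M\rangle=d\langle X^d\rangle$. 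This bracket identity is the main point, and it is a routine computation once the jump sizes are identified correctly. I would double check the factor $2$ coming from $D_j$ against the rate $1/2$.

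Next we apply Theorem \ref{main theorem2}, in its form \eqref{B.1}, with $X$ as above and $Y:=M$. Here $M$ is continuous, $M_0=0$ so $|M_0|\leq|X_0|$, and $d\langle M\rangle\leq d\langle X^d\rangle$. This gives $\|M\|_p\leq z_{p'}^{-1}\|X\|_p=z_{p'}^{-1}\|f\|_{L^p}$. By \eqref{eq:hamming-representation} and conditional Jensen (the $\ell^2_n$-norm to the $p$-th power is convex),
\[
 \tfrac12\|\mathbf R(I-P_{2a})f\|_{L^p(\ell^2_n)}
 =\|\E[M_\tau\mid Z_\tau]\|_p\leq\|M_\tau\|_p\leq z_{p'}^{-1}\|f\|_p .
\]

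Finally, we let $a\to\infty$. On the finite-dimensional space of functions on $\Ham_n$ we have $P_{2a}f\to\widehat f(\emptyset)$, and $\mathbf R$ kills constants. Hence $\mathbf R(I-P_{2a})f\to\mathbf Rf$ pointwise, which yields the stated bound $2/z_{p'}$.

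The main obstacle is justifying the localization in the bracket and terminal pairing steps. This is harmless here, since $u$ is bounded and $\E\langle X\rangle_\tau\leq\E|X_\tau|^2<\infty$, so all the martingales involved are $L^2$-bounded.
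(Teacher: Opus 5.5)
Your proposal is correct and follows the paper's proof essentially step by step. You use the same It\^o decomposition with the drift cancelled by harmonicity, and the same jump size $-2D_ju$ against the rate $1/2$, giving $d\langle X^d\rangle=2\sum_j|D_ju|^2\,ds=d\langle M\rangle$; then, as in the paper, you apply \eqref{B.1}, conditional Jensen with \eqref{eq:hamming-representation}, and the limit $a\to\infty$.
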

\begin{proof}
We first compute the jumps and the martingale decomposition of
$X$. If $N^j$ jumps at time $s\leq\tau$, then
$Z_s=(Z_{s-})^{(j)}$, while $Y_s=Y_{s-}$ by continuity. Hence
\begin{align*}
 \Delta X_s
 &=u((Z_{s-})^{(j)},Y_s)-u(Z_{s-},Y_s)
 =-2D_ju(Z_{s-},Y_s).
\end{align*}
It\^o's formula therefore gives
\begin{align*}
 X_t-X_0
 &=\int_0^{t\wedge\tau}\partial_yu(Z_{s-},Y_s)\,dY_s
   +\int_0^{t\wedge\tau}\partial_{yy}u(Z_{s-},Y_s)\,ds
-2\sum_{j=1}^n\int_0^{t\wedge\tau}
                  D_ju(Z_{s-},Y_s)\,dN_s^j.
\end{align*}
Write the compensated Poisson process $\widetilde N_t^j=N_t^j-t/2.$ Replacing
$dN_s^j$ by $d\widetilde N_s^j+ds/2$ cancels the
drift, since $u$ is harmonic. Thus
\begin{align*}
 X_t^c=\int_0^{t\wedge\tau}\partial_yu(Z_{s-},Y_s)\,dY_s,\qquad 
 X_t^d=-2\sum_{j=1}^n\int_0^{t\wedge\tau}
                    D_ju(Z_{s-},Y_s)\,d\widetilde N_s^j.
\end{align*}
Since $\Delta N_s^j \in \{0,1\}$ and different Poisson processes do not jump simultaneously we get
\[
 [X^d]_t=\sum_{0<s\leq t}|\Delta X_s|^2
 =4\sum_{j=1}^n\int_0^{t\wedge\tau}
             |D_ju(Z_{s-},Y_s)|^2\,dN_s^j.
\]
On the other hand, using $\ang{\widetilde N^j}_s=s/2$ and $\ang{Y}_s=2s$ we get
\[
 \langle X^d\rangle_t
 =2\int_0^{t\wedge\tau}\sum_{j=1}^n |D_ju(Z_{s-},Y_s)|^2\,ds, \quad  \langle X^c\rangle_t =2\int_0^{t\wedge\tau}|\partial_yu(Z_{s-},Y_s)|^2\,ds.
\]
and
\[
 \langle M\rangle_t
 =\sum_{j=1}^n\langle M^j\rangle_t
 =2\int_0^{t\wedge\tau}\sum_{j=1}^n
             |D_ju(Z_{s-},Y_s)|^2\,ds
 =\langle X^d\rangle_t.
\]
Applying Theorem~\ref{main theorem2}, Jensen's inequality and
\eqref{eq:hamming-representation} give
\[
 \frac12\|\mathbf R(I-P_{2a})f\|_{L^p(\Ham_n;\ell_n^2)}
 \leq\|M_\tau\|_p
 \leq z_{p'}^{-1}\|X_\tau\|_p
 =z_{p'}^{-1}\|f\|_{L^p(\mu_n)}.
\]
For $a\to\infty$, we have $P_{2a}f\to\E_{\mu_n}f$ and $\textbf{R}$ annihilates constants, so the claim follows.
\end{proof}

\begin{remark}
The argument compares the continuous target energy with the
predictable compensator of the horizontal jump energy. Classical
differential subordination would instead require
\[
 \sum_{j=1}^n|D_ju(x,y)|^2\leq|\partial_yu(x,y)|^2.
\]
This fails in general: for $n=2$ and $f(x)=x_1-x_2$, we have
$u(x,y)=e^{-y}(x_1-x_2)$, so at $x=(1,1)$ the right-hand side
vanishes whereas the left-hand side equals $2e^{-2y}$.
\end{remark}

\subsection{The Riesz vector on $\Z^n$}

Equip $\Z^n$ with counting measure and define
\[
 \partial_j^+f(x)=f(x+e_j)-f(x),\qquad
 \partial_j^-f(x)=f(x)-f(x-e_j),
\]
\[
 \Delta f(x)=\sum_{j=1}^n
 \bigl(2f(x)-f(x+e_j)-f(x-e_j)\bigr),\qquad
 R_j^\pm=\partial_j^\pm\Delta^{-1/2}.
\]
We consider the $2n$-dimensional vector
\[
 \mathbf Rf=(R_1^+f,R_1^-f,\ldots,R_n^+f,R_n^-f).
\]
Let $Z$ be the random walk jumping to each neighboring point at
rate one, and let $Y$
and $\tau$ as above. Initially take $f$ finitely
supported, set $u=P_yf$, and define
\[
 X_t=u(Z_{t\wedge\tau},Y_{t\wedge\tau}),\qquad
 M_t^{j,\pm}=\int_0^{t\wedge\tau}
       \partial_j^\pm u(Z_{s-},Y_s)\,dY_s.
\]
The jump and continuous bracket computations give
\begin{align*}
 d\langle X^d\rangle_t
 =\mathbf1_{\{t\leq\tau\}}
   \sum_{j=1}^n
   \bigl(|\partial_j^+u|^2+|\partial_j^-u|^2\bigr)
   (Z_{t-},Y_t)\,dt,\qquad 
 d\langle M\rangle_t=2\,d\langle X^d\rangle_t.
\end{align*}
Thus Theorem~\ref{main theorem2} applies to $M/\sqrt2$ and $X$. Writing $\E_{x,a}$ for expectation with $Z_0=x$ and $Y_0=a$,
invariance of counting measure gives
\[
 \sum_{x\in\Z^n}\E_{x,a}|f(Z_\tau)|^p
 =\|f\|_{\ell^p(\Z^n)}^p.
\]
The usual Poisson
pairing, summed over starting points, yields
\[
 \frac12\|\mathbf R(I-P_{2a})f\|_{\ell^p(\Z^n;\ell_{2n}^2)}
 \leq
 \left(\sum_{x\in\Z^n}\E_{x,a}|M_\tau|^p\right)^{1/p}
 \leq\frac{\sqrt2}{z_{p'}}\|f\|_{\ell^p(\Z^n)}.
\]
Letting $a\to\infty$ as before, we obtain the result.

\section{Further applications and remarks}

The main idea of this paper is to study the continuous and jump drift terms together, allowing their defects to compensate at the predictable level. The same mechanism leads to extensions of other martingale inequalities under weaker hypotheses, which are the subject of ongoing work by the author and some collaborators. We now comment on possible further applications.

Consider a Markov jump-diffusion process $Z$ with generator \(\mathcal L=\mathcal L_c+\mathcal L_j\). Define
\[
\Gamma_i(v)=\frac12\bigl(\mathcal L_i(v^2)-2v\mathcal L_iv\bigr),
\qquad i=c,j.
\]
Let \(u\) be \(\mathcal L\)-harmonic. Under suitable domain and integrability assumptions, the martingale \(X_t=u(Z_t)\), with stopping if necessary, satisfies
\[
d\langle X^c\rangle_t=2\Gamma_c(u)(Z_{t-}),dt,
\qquad
d\langle X^d\rangle_t=2\Gamma_j(u)(Z_{t-}),dt.
\]
Suppose that a continuous martingale $Y$, starting at zero, has bracket
\[
d\langle Y\rangle_t=2|Gu(Z_{t-})|^2dt,
\]
for a suitable gradient or transform \(G\). For \(2\leq p<\infty\), the pointwise comparison
\[
|Gu|^2\leq K^2\bigl(\Gamma_c(u)+\Gamma_j(u)\bigr),
\]
therefore Theorem \ref{main theorem1} applies. If the stronger comparison
\[
|Gu|^2\leq K^2\Gamma_j(u)
\]
holds, then Theorem \ref{main theorem2} applies.
Classical subordination would require control by \(K^2\Gamma_c(u)\) alone. These criteria identify which part of the source energy is available to control the target and which sharp martingale estimate can be applied. In the study of the Riesz vector in the Dunkl setting \cite{dunklmine}, for example, Theorem~\ref{main theorem1} applies to the natural martingale representation, while classical differential subordination fails and Theorem~\ref{main theorem2} does not apply in general to that representation. The example in Section~\ref{sec: discussion of hypothesis} also suggests further applications to comparisons between stochastic integrals driven by Brownian motion and compensated Poisson processes. Predictable subordination allows such comparisons even when their different pathwise behavior prevents classical differential subordination. 
\subsection{AI disclosure} The author used ChatGPT to assist with exposition and technical calculations, mostly to simplify earlier versions of the proof of \cref{jump prop}. Also, while the author was seeking a discontinuous martingale that could reproduce, in norm, the behavior arising in Davis’s stopping-time problem \cite{Davis1976}, an AI-assisted literature search identified parabolic Azéma martingales, previously unknown to the author. This helped prove the sharpness. The formulation of the results and the proof strategies are solely due to the author and draw inspiration from previous work in \cite{dunklmine}. All AI-assisted material was carefully checked by the author, who takes full responsibility for the contents of the paper.
\bibliographystyle{amsalpha}
\bibliography{biblio}
\end{document}